\newcommand{\arr}{\rightarrow}
\newcommand{\vv}{\vert \vert}
\newcommand{\vvh}{\vert \vert_{\it Hof}}
\newcommand{\R}{\mathbb{R}}
\newcommand{\Q}{\mathbb{Q}}
\newcommand{\N}{\mathbb{N}}
\newcommand{\Z}{\mathbb{Z}}
\newcommand{\C}{\mathbb{C}}
\newcommand{\T}{\mathbb{T}}
\newcommand{\Symp}{\operatorname{Symp}}
\newcommand{\Ham}{\operatorname{Ham}}
\newcommand{\Fix}{\operatorname{Fix}}
\newcommand{\Id}{\operatorname{Id}}
\newcommand{\Li}{\mathcal{L}^*}
\newcommand{\scon}{\overset{C^{\infty}}{\longrightarrow}} 
\newcommand{\larr}{\longrightarrow}
\newtheorem{theorem}{Theorem}
\newtheorem{corollary}[theorem]{Corollary}
\newtheorem{question}[theorem]{Question}
\newtheorem*{question*}{Question}
\newtheorem{definition}[theorem]{Definition}
\newtheorem{lemma}[theorem]{Lemma}
\newtheorem{proposition}[theorem]{Proposition}
\newtheorem{claim}[theorem]{Claim}
\newtheorem*{lemma*}{Lemma}
\newtheorem*{theorem*}{Theorem}
\newtheorem*{remark*}{Remark}
\newtheorem*{definition*}{Definition}
\newtheorem{remark}[theorem]{Remark}
\theoremstyle{remark}
\theoremstyle{remarks}
\newtheorem*{remarks*}{Remarks}
\newtheorem{remarks}[theorem]{Remarks}
\theoremstyle{conjecture}
\newtheorem*{conjecture*}{Conjecture}
\theoremstyle{definition}
\newtheorem*{claim*}{Claim}
\newtheorem*{example*}{Examples}
\title{A H\"older-type inequality for the $C^0$ distance and  Anosov-Katok pseudo-rotations}
\author{Du\v{s}an Joksimovi\'c and Sobhan Seyfaddini}
\begin{document}

\maketitle
\begin{abstract}
    We prove a H\"older-type inequality for Hamiltonian diffeomorphisms relating the $C^0$ norm, the $C^0$ norm of the derivative and the Hofer/spectral norm.  We obtain as a consequence that sufficiently fast convergence in Hofer/spectral metric forces $C^0$ convergence.
    
    The second theme of our paper is the study of pseudo-rotations that arise from the Anosov-Katok method.  As an application of our H\"older-type inequality, we prove a $C^0$ rigidity result for such pseudo-rotations.
\end{abstract}

\tableofcontents

\section{Introduction}
  Denote by $\Ham(M, \omega)$ the group of Hamiltonian diffeomorphisms of a (closed and connected) symplectic manifold $(M, \omega)$. This group possesses two remarkable bi-invariant distances: the Hofer metric and the spectral metric $\gamma$. Exploring the relation between these two metrics and the $C^0$ distance has been a prominent theme in $C^0$ symplectic topology. To this date, research activity has mainly focused on understanding whether the Hofer/spectral metric displays any sort of stability, e.g.~continuity, with respect to the $C^0$ topology. As we will recall in Section \ref{sec:prelim}, much progress has been obtained in this direction over the past decade.

In the opposite direction, it is known that for any two distinct points $x, y \in M$, one can find Hamiltonian diffeomorphisms, arbitrarily close to the identity in Hofer or the spectral distance, which map $x$ to $y$.  Consequently, convergence in the Hofer distance, or the spectral distance, does not imply $C^0$ convergence.  Despite this, we show in this article that sufficiently \emph{fast convergence in Hofer/spectral metric forces $C^0$ convergence}; see Remark \ref{rem:observations}. We achieve this by proving a H\"older-type inequality for Hamiltonian diffeomorphisms relating the $C^0$ norm, the $C^0$ norm of the derivative and the Hofer/spectral norm.

As an application, we prove $C^0$ rigidity results for a certain class of Hamiltonian pseudo-rotations.

\subsection{The inequality}
Let $(M, \omega)$ be a closed and connected symplectic manifold.  We equip $M$ with a Riemannian metric $g$ and denote by $d$ the corresponding distance on $M$. Recall that the $C^0$ distance between two maps $\varphi, \psi$ is given by 

$$d_{C^0}(\varphi, \psi) : = \max_{x \in M} d \left(\varphi(x), \psi(x)\right).$$
In the symplectic literature, there exist inequalities which, under certain assumptions, provide upper bounds for the $\gamma$, or the Hofer norm, in terms of the $C^0$ distance; for example, Hofer's $C^0$--Energy estimate states that for a compactly supported Hamiltonian diffeomorphism of $\R^{2n}$ we have

$$\vv \varphi \vvh \leq C \, \mathrm{diam } (supp(\varphi)) \, d_{C^0}(\varphi, \Id),$$
where $\mathrm{diam } (supp(\varphi))$ is the diameter of the support of $\varphi$. Inequalities of similar nature exist for the spectral norm; see \cite{SeyC0, Shelukhin-18}. In the theorem below we prove an inequality in the opposite direction. Here, $\vv D\varphi \vv$ denotes the $C^0$ norm of the derivative of $\varphi$ as induced by the Riemannian metric.  The Hofer and the spectral norm of $\varphi \in \Ham (M, \omega)$, whose definitions are recalled in Section \ref{sec:prelim}, are denoted by $\vv \varphi \vvh$ and  $\gamma(\varphi)$, respectively.

\begin{theorem} \label{theo:inequality}
Let $(M, \omega)$ be a closed and connected symplectic manifold and $g$ be a Riemannian metric on it. There exists a constant $C := C(M,\omega,g) > 0$ such that for every $\varphi \in \Ham (M,\omega)$ we have
$$d_{C^0}(\varphi, \Id) \leq C \sqrt{\gamma(\varphi)} \; \vv D \varphi \vv.$$
Similarly, for every $\varphi \in \Ham (M,\omega)$ we have $$d_{C^0}(\varphi, \Id) \leq C \sqrt{\vv \varphi \vvh} \; \vv D \varphi \vv.$$
\end{theorem}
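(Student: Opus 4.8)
The plan is to prove a pointwise version of the inequality: for any point $x \in M$, I want to bound $d(\varphi(x), x)$ by a quantity involving $\gamma(\varphi)$ and $\vv D\varphi \vv$. The key idea linking the spectral norm to displacement is that $\gamma(\varphi)$ controls the \emph{symplectic area} that $\varphi$ can displace; more precisely, a standard fact in $C^0$ symplectic topology is that if $\varphi$ displaces an open set $U$ (i.e.\ $\varphi(U) \cap U = \emptyset$), then $\gamma(\varphi)$ is bounded below by a suitable capacity-type quantity of $U$. The heuristic I would exploit is the contrapositive: if $d_{C^0}(\varphi, \Id)$ is large, then $\varphi$ moves some point $x$ a large distance, and using the derivative bound $\vv D\varphi \vv$ I can propagate this to show $\varphi$ nearly displaces a ball of controlled radius, which in turn forces $\gamma(\varphi)$ to be large.

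Concretely, suppose $d(\varphi(x_0), x_0) = d_{C^0}(\varphi, \Id) =: \delta$ for some $x_0$. Using the Lipschitz bound on $\varphi$ coming from $\vv D\varphi \vv =: L$, any point $y$ within distance $r$ of $x_0$ satisfies $d(\varphi(y), \varphi(x_0)) \leq L r$, and $d(y, x_0) \leq r$, so $d(\varphi(y), y) \geq \delta - (L+1)r$. Choosing $r$ of order $\delta/(L+1)$ shows that $\varphi$ substantially moves an entire ball $B(x_0, r)$, and in fact for $r$ comparable to $\delta/L$ the image ball $\varphi(B(x_0, r))$ is disjoint from $B(x_0, r)$, i.e.\ $\varphi$ displaces a ball of radius $\sim \delta/L$. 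The next step is to invoke the lower bound for the spectral norm in terms of the displacement energy / symplectic size of this ball: a displaced ball $B$ of radius $\rho$ forces $\gamma(\varphi) \gtrsim c(B)$, where the relevant capacity of a ball of radius $\rho$ in a $2n$-dimensional symplectic manifold scales like $\rho^2$. This gives $\gamma(\varphi) \gtrsim (\delta/L)^2$, which upon rearranging yields $\delta \lesssim \sqrt{\gamma(\varphi)}\, L$, i.e.\ $d_{C^0}(\varphi, \Id) \leq C \sqrt{\gamma(\varphi)}\; \vv D\varphi\vv$ as desired.

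For the Hofer version, the argument is essentially identical, since the same displacement-energy mechanism applies: if $\varphi$ displaces a ball of radius $\rho$, then its displacement energy—hence $\vv \varphi \vvh$—is bounded below by the symplectic capacity of the ball, again of order $\rho^2$. Combined with $\gamma(\varphi) \leq \vv \varphi \vvh$ (or by running the displacement-energy estimate directly with the Hofer norm), the second inequality follows from the first with the same constant structure.

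The main obstacle I anticipate is making the ``displaced ball forces large spectral norm'' step quantitatively precise and uniform over the closed manifold $M$. Two technical points need care. First, the capacity lower bound for $\gamma$ is cleanest for displacement in $\R^{2n}$ or for small Darboux balls; on a general closed $M$ one must use a Darboux chart around $x_0$ and control how the Riemannian metric $g$ compares to the standard symplectic structure there, which is precisely where the constant $C = C(M, \omega, g)$ enters and why it cannot be made universal. Second, the estimate is only useful in the regime where $\delta$ is small enough that the relevant ball fits inside a single Darboux chart with uniformly bounded distortion; for large $\delta$ the inequality must be handled separately, likely by noting that $d_{C^0}(\varphi, \Id) \leq \operatorname{diam}(M)$ is bounded while $\vv D\varphi\vv \geq 1$ (since $\varphi$ is volume-preserving, its derivative cannot be uniformly contracting), so the inequality holds trivially after enlarging $C$. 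Patching the small-$\delta$ Darboux estimate to the trivial large-$\delta$ bound, with a single constant depending only on $(M, \omega, g)$, is the part requiring the most bookkeeping.
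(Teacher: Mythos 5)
Your proposal is correct and is essentially the paper's argument run in the contrapositive: both rest on covering $M$ by finitely many Darboux charts with uniformly controlled distortion, applying the energy--capacity inequality for $\gamma$ (and hence for the Hofer norm) to a ball whose displacement or non-displacement is tied to $\sqrt{\gamma(\varphi)}$ through the Lipschitz bound $\vv D\varphi \vv$, and disposing of the remaining regime via $d_{C^0}(\varphi,\Id) \leq \mathrm{diam}(M)$ and $\vv D\varphi \vv \geq 1$. The one point to tighten is your large-displacement case: it is not trivial from boundedness of $\mathrm{diam}(M)$ alone, since you also need a uniform positive lower bound on $\gamma(\varphi)$ in that regime (obtained by displacing a ball of the threshold, chart-sized radius); the paper sidesteps this by splitting instead on whether $\gamma(\varphi)$ is below a fixed $\delta$, where the needed lower bound is the defining hypothesis of the case.
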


\begin{remarks}\label{rem:observations} \normalfont
We make the following observations regarding Theorem \ref{theo:inequality}:
\begin{itemize}
    \item The second inequality is an immediate consequence of the first since $\gamma(\varphi) \leq \vv \varphi \vvh.$
    \item There exists a generalization of the above theorem which holds for Hamiltonian homeomorphisms; see Remark \ref{rem:homeos}.
    \item In the case where $(M, \omega) = (\R^{2n}, \omega_{std})$ and $g$ is the Euclidean metric, our proof of Theorem \ref{theo:inequality} yields the more precise inequality $$d_{C^0}(\varphi, \Id) \leq  \sqrt{\gamma( \varphi) } \; ( 1+ \vv D \varphi \vv),$$
    for every $\varphi \in \Ham_c (\R^{2n}, \omega_{std}).$
    \item As an immediate consequence of Theorem \ref{theo:inequality}, we obtain that sufficiently fast convergence in Hofer/spectral metric forces $C^0$ convergence: suppose that $\varphi_i$ is a sequence of Hamiltonian diffeomorphisms such that $\sqrt{\vv \varphi_i \vv_{Hof}} \hspace{0.7mm} \vv D\varphi_i \vv \to 0$ or $\sqrt{\gamma (\varphi_i)} \hspace{0.7mm} \vv D\varphi_i \vv \to 0.$
    Then,  $$\varphi_i \overset{C^0}{\longrightarrow} \Id.$$
    This is the key observation behind our applications of Theorem \ref{theo:inequality} which are listed in the next section.
\end{itemize}

\end{remarks}

\subsection{Applications}

As an application of the above inequality we prove a relation between different forms of rigidity for Hamiltonian diffeomorphisms.

\begin{definition} \label{def:rigidity}
Let $\nu : \Ham (M,\omega) \arr \R$ be a norm. We say that $\varphi \in \Ham (M,\omega)$ is $\nu$ \emph{rigid} if there exists a sequence of integers $n_i \arr \infty$ such that $\nu (\varphi^{n_i})  \overset{i \arr \infty}{\longrightarrow} 0.$ 

We say that $\varphi$ is \emph{super-exponentially $\nu$ rigid} if for every $c>0$ there exists a sequence $n_i \arr \infty$ such that $\nu( \varphi^{n_i}) < e^{-c n_i}.$ 
\end{definition}

The following is an immediate corollary of Theorem \ref{theo:inequality}.

\begin{corollary} \label{cor:super-gamma-rigid}
Every super-exponentially $\gamma$/Hofer rigid map $\varphi \in \Ham (M,\omega)$ is $C^0$ rigid. 
\end{corollary}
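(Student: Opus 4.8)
The plan is to derive Corollary \ref{cor:super-gamma-rigid} directly from Theorem \ref{theo:inequality} together with the super-exponential decay hypothesis. Let me think about what needs controlling. Suppose $\varphi$ is super-exponentially $\gamma$-rigid (the Hofer case is identical, using $\gamma \leq \vv\cdot\vvh$). We want to produce a sequence $n_i \to \infty$ with $\varphi^{n_i} \overset{C^0}{\longrightarrow} \Id$. By Theorem \ref{theo:inequality} applied to $\varphi^{n_i}$,
$$d_{C^0}(\varphi^{n_i}, \Id) \leq C \sqrt{\gamma(\varphi^{n_i})} \; \vv D\varphi^{n_i}\vv,$$
so it suffices to show that the right-hand side tends to $0$. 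The factor $\sqrt{\gamma(\varphi^{n_i})}$ is controlled by the super-exponential hypothesis, but the derivative term $\vv D\varphi^{n_i}\vv$ can grow with $n_i$, and this growth is the crux of the argument.

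The key observation is that the derivative cannot grow faster than exponentially in the iterate. Since $\varphi$ is a fixed diffeomorphism of the compact manifold $M$, set $\Lambda := \vv D\varphi\vv < \infty$. By the chain rule, $D(\varphi^{n}) = D\varphi|_{\varphi^{n-1}(x)} \circ \cdots \circ D\varphi|_x$, so by submultiplicativity of the operator norm we obtain $\vv D\varphi^{n}\vv \leq \Lambda^{n}$. (More precisely one bounds the operator norm of the composition of $n$ maps, each of operator norm at most $\Lambda$, using the Riemannian metric; compactness guarantees $\Lambda < \infty$.) Thus $\vv D\varphi^{n_i}\vv$ grows at most like $e^{(\log \Lambda) n_i}$, i.e.~at a definite exponential rate depending only on $\varphi$.

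Now the super-exponential hypothesis is exactly strong enough to beat this. Fix $c > 2\log \Lambda$ (if $\Lambda \leq 1$ any $c > 0$ works). By Definition \ref{def:rigidity}, there is a sequence $n_i \to \infty$ with $\gamma(\varphi^{n_i}) < e^{-c n_i}$. Combining the bounds,
$$d_{C^0}(\varphi^{n_i}, \Id) \leq C \sqrt{e^{-c n_i}} \; \Lambda^{n_i} = C \, e^{-\tfrac{c}{2} n_i + (\log \Lambda) n_i} = C \, e^{-(\tfrac{c}{2} - \log \Lambda) n_i}.$$
Since $c/2 - \log \Lambda > 0$ by our choice of $c$, the exponent is negative and the right-hand side tends to $0$ as $i \to \infty$. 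Hence $\varphi^{n_i} \overset{C^0}{\longrightarrow} \Id$, which is precisely the statement that $\varphi$ is $C^0$ rigid.

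I expect the only real subtlety to be the exponential bound $\vv D\varphi^{n}\vv \leq \Lambda^{n}$, which is where the ``super-exponential'' strength of the hypothesis is genuinely used: ordinary exponential $\gamma$-rigidity would not suffice, since a decay rate slower than $\Lambda^{-2n}$ could be overwhelmed by the derivative growth. Everything else is a direct substitution into Theorem \ref{theo:inequality} and an elementary estimate, so there is no serious obstacle beyond bookkeeping the rate constants.
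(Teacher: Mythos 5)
Your proposal is correct and follows essentially the same route as the paper: apply Theorem \ref{theo:inequality} to $\varphi^{n_i}$, bound $\vv D\varphi^{n_i}\vv \leq \vv D\varphi\vv^{n_i}$ by submultiplicativity, and choose the constant in the super-exponential hypothesis large enough (your $c > 2\log\Lambda$ versus the paper's $e^{-2cn_i}$ with $c > \log\vv D\varphi\vv$ is just a relabeling) so that the decay beats the exponential derivative growth. No gaps.
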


\begin{proof}
We present the proof for the Hofer metric leaving the other case to the reader.
Let $c> \log (\vv D\varphi \vv).$
Since $\varphi$ is super-exponentially Hofer rigid there exists a sequence $n_i \arr \infty$ such that 
\begin{equation} \label{estimate}
    \vv \varphi^{n_i} \vvh < e^{-2c n_i}.
\end{equation}
%
%

By Theorem \ref{theo:inequality} we have
\begin{align*}
    d_{C^0} (\varphi^{n_i}, \Id) &\leq C\sqrt{\vv \varphi^{n_i} \vvh} \; \vv D\varphi^{n_i} \vv \\
                                 &\overset{\eqref{estimate}}{\leq} C \sqrt{e^{-2c n_i} }\hspace{1mm} \vv D\varphi \vv^{n_i} \\
                                 &\leq C e^{n_i (\log (\vv D\varphi \vv) - c)} \overset{i \arr \infty}{\larr} 0.
\end{align*}
This completes the proof of Corollary \ref{cor:super-gamma-rigid}.
\end{proof}

In the next section we present examples of maps which are super-exponentially $\gamma$, or Hofer, rigid.
\subsection{Anosov-Katok pseudo-rotations}
A Hamiltonian diffeomorphism $\varphi$ of a closed and connected symplectic manifold $(M, \omega)$ is called a {\bf pseudo-rotation} if it has finitely many periodic points.\footnote{There exist several working definitions of Hamiltonian pseudo-rotations in the literature; see  \cite[Def.\ 1.1]{CGG19b} and the discussion therein.} Such diffeomorphisms have been of great interest in  dynamical systems and symplectic topology; see, for example, \cite{Anosov-Katok, Fathi-Herman, Fayad-Katok, BCL04, BCL06, Bramham15a, Bramham15b, LeCalvez16, AFLXZ, Ginzburg-Gurel18a,  Ginzburg-Gurel18b, CGG19a, CGG19b, Shelukhin19a, Shelukhin19b}. 

As an example, an irrational rotation of the $2$-sphere is a pseudo-rotation.  Examples with more complicated dynamics, e.g.~with a finite number of ergodic measures, have been produced via the celebrated conjugation method of \textbf{Anosov-Katok}; see \cite{Anosov-Katok, Fayad-Katok, LeRoux-Sey}. The striking feature of the Anosov-Katok method is that it obtains dynamically exotic pseudo-rotations as $C^\infty$ limits of conjugates of ``rotations" (i.e.\ conjugates of elements of a torus action). This motivates the following definition.

\begin{definition}[Anosov-Katok pseudo-rotations] \label{def:AKPR}
Let $(M,\omega)$ be a closed and connected symplectic manifold which admits a Hamiltonian action of a torus $\mathbb{T}^k = \mathbb{R}^k/\mathbb{Z}^k$, i.e.\ a group morphism $R: \mathbb T^k \rightarrow \Ham(M, \omega)$. 

We call a pseudo-rotation $\varphi$ an \emph{Anosov-Katok pseudo-rotation} (abbreviated \emph{AKPR}) if
there exist sequences $h_m \in \Symp (M,\omega)$ and $\alpha_m \in \mathbb{T}^k$ such that
$h_m^{-1} R_{\alpha_m} h_m \scon  \varphi.$
\end{definition}

To the best of our knowledge, all known examples of pseudo-rotations are AKPRs. 

\begin{remark} \label{rmk:conditions} \normalfont 
We remark that the Anosov-Katok method requires the following additional requirements:  the fixed-point set of the toric action $\Fix(R)$ must be finite and the action must be locally free in $M\setminus \Fix(R)$.  Moreover, each symplectomorphism $h_m$ coincides with the identity in a neighborhood $U_k$ of $\Fix(R)$.  Pseudo-rotations obtained this way have the minimal number of fixed points, namely, exactly the fixed points of the toric action. 
We have not incorporated these properties into our definition as they are not needed in the proofs of our results.  
\end{remark}

\subsubsection{Exponentially Liouville AKPRs}
We say that $\alpha \in \mathbb{T}^k = \R^k / \Z^k$ is \emph{exponentially Liouville} if it belongs to the set 
$$\Li :=\left\{ \alpha \in \mathbb{T}^k \hspace{1mm} \vline \hspace{1mm} \forall c>0 \hspace{1mm} \exists k\in \N \hspace{1mm} \text{  s.t. } \hspace{1mm} 0 < \vv k\alpha \vv < e^{-c k} \right\},$$
where $\vv \cdot \vv$ is the distance defined by
\begin{equation} \label{distance}
    \vv (\alpha_1, ... , \alpha_k) \vv := \max_{1 \leq i \leq k} \vert \alpha_i \vert,
\end{equation}
where $\vert \cdot \vert$ is the distance from $0$ on $S^1 = \R / \Z.$

The set $\Li$ forms a residual set (i.e.\ a countable intersection of open and dense sets) in $\mathbb{T}^k$; see \cite[Proposition 5.15]{Ginzburg-Gurel18a} for a proof.

\begin{definition} \label{def:exp_Liouville}
We say that an AKPR $\varphi$ is \emph{exponentially Liouville} if there exist sequences $h_m \in \Symp (M, \omega)$ and $\alpha_m \in \mathbb{T}^k = \R^k / \Z^k$ as in Definition \ref{def:AKPR} such that the sequence $\{\alpha_m\}$ has a limit point which is exponentially Liouville.
\end{definition}

The notion of exponentially Liouville pseudo-rotations dates back to the article \cite{Bramham15b} by Bramham, where it is introduced in the case of the disc.
In \cite{Ginzburg-Gurel18a}, Ginzburg and G\"urel present a generalization of this notion for pseudo-rotations of $\C P^n$. As we explain in Section \ref{sec:C0-rigidity-proof}, under the assumptions from Remark \ref{rmk:conditions}, an AKPR which is exponentially Liouville in the sense of Definition \ref{def:exp_Liouville} is also exponentially Liouville in the sense of Ginzburg and G\"urel, see Remark \ref{rem:exp-liouvill-equiv} below.

\subsubsection{$C^0$ Rigidity.} Our main application of Theorem \ref{theo:inequality} concerns $C^0$ rigidity of pseudo-rotations. 
%
%
Bramham and Ginzburg-G\"urel have shown that exponentially Liouville pseudo-rotations are $C^0$ rigid in the case of the disc \cite{Bramham15b} and $\C P^n$ \cite{Ginzburg-Gurel18a}, respectively.  In Section \ref{sec:proofs}, we will give a new proof of the rigidity results of \cite{Bramham15b, Ginzburg-Gurel18a} based on the inequality from Theorem \ref{theo:inequality} (or more precisely, Corollary \ref{cor:super-gamma-rigid}).  Moreover, we establish $C^0$ rigidity for exponentially Liouville AKPRs on a general closed symplectic manifold.

\begin{theorem}\label{theo:C0-rigidity}
Every exponentially Liouville AKPR is super-exponentially $\gamma$/Hofer rigid. Consequently, every exponentially Liouville AKPR is $C^0$ rigid.
\end{theorem}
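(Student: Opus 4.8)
The plan is to establish super-exponential Hofer rigidity; super-exponential $\gamma$ rigidity then follows at once from $\gamma \le \vvh$, and $C^0$ rigidity is immediate from Corollary \ref{cor:super-gamma-rigid}. So I would fix an exponentially Liouville AKPR $\varphi$, choose sequences $h_m \in \Symp(M,\omega)$ and $\alpha_m \in \T^k$ as in Definition \ref{def:AKPR}, and pass to a subsequence so that $\alpha_m \to \alpha$ with $\alpha \in \Li$. The first step is to compute the Hofer norm of the iterates $\varphi^n$ in terms of the torus action alone. Since $R$ is a group morphism, $R_{\alpha_m}^{\,n} = R_{n\alpha_m}$, and raising the $C^\infty$-convergent sequence to a \emph{fixed} power $n$ gives $h_m^{-1} R_{n\alpha_m} h_m \scon \varphi^n$. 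The Hofer norm is invariant under conjugation by symplectomorphisms, so $\vv h_m^{-1} R_{n\alpha_m} h_m \vvh = \vv R_{n\alpha_m}\vvh$; combining this with the continuity of the Hofer norm in the $C^\infty$ topology and with $R_{n\alpha_m}\to R_{n\alpha}$, I would obtain
$$\vv \varphi^n \vvh = \lim_m \vv R_{n\alpha_m}\vvh = \vv R_{n\alpha}\vvh.$$

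Next I would bound $\vv R_{n\alpha}\vvh$ linearly in the distance $\vv n\alpha \vv$ of \eqref{distance}. Let $K_1,\dots,K_k$ be the normalized Hamiltonians generating the $k$ circle subactions of $R$, and set $L := k\max_i \operatorname{osc}(K_i)$, which is finite since $M$ is closed. For $\beta\in\T^k$ with $\vv\beta\vv<1/2$, writing $R_\beta$ as the composition of the time-$\tilde\beta_i$ maps of the commuting flows of the $K_i$ (with $\tilde\beta_i\in(-1/2,1/2)$ the representatives) and using subadditivity of the Hofer norm gives $\vv R_\beta\vvh \le \sum_i |\tilde\beta_i|\,\operatorname{osc}(K_i)\le L\,\vv\beta\vv$. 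Hence $\vv\varphi^n\vvh \le L\,\vv n\alpha\vv$ whenever $\vv n\alpha\vv<1/2$. Now fix $c>0$. Applying the definition of $\Li$ with the constants $2c+j$, $j\in\N$, and extracting a subsequence (the corresponding exponents force the indices to be unbounded since $\vv n\alpha\vv>0$), I would produce $n_i\to\infty$ with $0<\vv n_i\alpha\vv<e^{-2cn_i}$; for $i$ large these satisfy $\vv n_i\alpha\vv<1/2$ and $L e^{-cn_i}<1$, whence $\vv\varphi^{n_i}\vvh\le L\vv n_i\alpha\vv< Le^{-2cn_i}<e^{-cn_i}$. As $c$ was arbitrary, $\varphi$ is super-exponentially Hofer rigid, and I would conclude as above.

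The one genuinely non-formal ingredient is the continuity of the Hofer norm with respect to the $C^\infty$ topology, used to pass from $h_m^{-1}R_{n\alpha_m}h_m \scon \varphi^n$ to convergence of the norms. Via bi-invariance and the triangle inequality this reduces to continuity at the identity: if $\eta_m\scon\Id$ then $\vv\eta_m\vvh\to 0$, which holds because maps $C^\infty$-close to the identity are generated by $C^\infty$-small Hamiltonians. I expect this to be the main point requiring care, though I note that only $C^\infty$-continuity is needed here, which is far more elementary than the $C^0$-continuity questions discussed in Section \ref{sec:prelim}. Finally, I would remark that the pseudo-rotation hypothesis and the extra conditions of Remark \ref{rmk:conditions} play no role in this argument: only the approximation structure of Definition \ref{def:AKPR} and the Liouville property of the limit point $\alpha$ are used.
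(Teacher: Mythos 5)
Your proposal is correct and follows essentially the same route as the paper: you reprove its Lemma \ref{lem:hofer-equality} ($\vv \varphi^n \vvh = \vv R_{n\alpha}\vvh$ via conjugation-invariance and $C^\infty$-continuity of the Hofer norm) and its Lemma \ref{lem:hofer-inequality} (a linear bound $\vv R_\beta \vvh \lesssim \vv\beta\vv$ from the generating Hamiltonians of the circle subactions), then conclude via Corollary \ref{cor:super-gamma-rigid}. If anything you are slightly more careful than the paper in absorbing the multiplicative constant (by invoking the Liouville condition with $2c$) and in extracting a sequence $n_i \to \infty$ from the ``for all $c$ there exists $k$'' form of the definition of $\Li$.
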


This theorem begs the following question.

\begin{question}\label{que:C0-rigidity}
 Is every AKPR $C^0$ rigid?
\end{question}
We refer the reader to an extensive discussion of this and related questions in \cite{AFLXZ}.

\begin{remark*} \normalfont
As we explain below, every AKPR is rigid with respect to $\gamma$ and Hofer norms; see Proposition \ref{prop:hofer-rigidity} and Corollary \ref{cor:gamma-rigidity} below.

It is proven in \cite{Ginzburg-Gurel18a} that every pseudo-rotation of $\C P^n$ is $\gamma$ rigid.\footnote{In \cite{Ginzburg-Gurel18a}, this is proven for pseudo-rotations of $\C P^n$ with exactly $(n+1)$ periodic points. In view of the recent results by Shelukhin \cite{Shelukhin-HoferZehnder}, the proof given in \cite{Ginzburg-Gurel18a} extends to the (hypothetical)  case of pseudo-rotations with more than $(n+1)$ periodic points (which conjecturally do not exist).  We thank Erman \c{C}ineli for this clarification.}  As far as we know, there are no known examples of pseudo-rotations which are not $\gamma$ (or Hofer) rigid. Moreover, $\gamma$-rigid maps are very rare.
For a more detailed discussion on the behaviour of the spectral norm of the iterates of a Hamiltonian diffeomorphism we refer the reader to \cite{CGG22}.
\end{remark*}

\subsection{Entropy}
In this section, we briefly discuss the topological entropy of pseudo-rotations.  We refer the reader to \cite{Hasselblatt-Katok} for a comprehensive introduction to the notion of entropy.  

 One of the main reasons behind the prominence of pseudo-rotations in surface dynamics is that despite having zero topological entropy  they could have interesting dynamics. One way to see that pseudo-rotations of the sphere have zero topological entropy is via Katok's results \cite{Katok-IHES} which imply in particular that a surface diffeomorphism with positive topological entropy has a horseshoe and hence infinitely many periodic points.  This aspect of Katok's theory does not generalize to higher dimensions and so the vanishing of topological entropy for higher dimensional pseudo-rotations is unknown. This question might be easier to address for AKPRs since they are, by definition, $C^\infty$ limits of rotations (which have zero entropy).  However, the question seems to be open even in this simplified setting.
 
 \begin{question}\label{que:entropy}
 Does every AKPR have zero topological entropy?
 \end{question}
 
 Thanks to Theorem \ref{theo:C0-rigidity}, we can give a partial answer to the above question.
 
 \begin{corollary}\label{corol:entropy}
 Every exponentially Liouville $AKPR$ has zero topological entropy.
 \end{corollary}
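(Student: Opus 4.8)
The plan is to deduce Corollary \ref{corol:entropy} from Theorem \ref{theo:C0-rigidity} via the following general principle: \emph{any $C^1$-diffeomorphism $\varphi$ of a closed manifold for which there is a sequence $n_i \arr \infty$ with $\varphi^{n_i} \ccon \Id$ has vanishing topological entropy.} Since an exponentially Liouville AKPR is $C^0$ rigid by Theorem \ref{theo:C0-rigidity}, such a sequence exists (with, moreover, super-exponentially small $d_{C^0}(\varphi^{n_i}, \Id)$, though mere convergence to $0$ is all we shall use), and the corollary follows. So fix $\varphi$ with $\delta_i := d_{C^0}(\varphi^{n_i}, \Id) \arr 0$, $n_i \arr \infty$, and set $L := \max(1, \vv D\varphi \vv)$ and $m := \dim M$.

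I would compute the topological entropy $h_{\mathrm{top}}(\varphi)$ using \emph{spanning} sets rather than separated sets. Writing $r(n, \epsilon)$ for the minimal cardinality of an $(n, \epsilon)$-spanning set in the Bowen metric $d_n(x,y) := \max_{0 \le k < n} d(\varphi^k x, \varphi^k y)$, the sequence $\log r(n, \epsilon)$ is subadditive in $n$, so by Fekete's lemma $\lim_n \tfrac1n \log r(n, \epsilon) = \inf_n \tfrac1n \log r(n, \epsilon)$ and $h_{\mathrm{top}}(\varphi) = \lim_{\epsilon \arr 0} \inf_n \tfrac1n \log r(n, \epsilon)$. The point of passing to spanning sets is that this replaces a $\limsup$ over all times by an infimum: it now suffices, for each small $\epsilon$, to exhibit a \emph{single} time $n$ at which $\tfrac1n \log r(n, \epsilon)$ is small.

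The time I would use is $n = J_i \, n_i$ with $J_i := \lfloor \epsilon / (4\delta_i) \rfloor$. Two estimates combine here. First, a drift estimate: since $g_i := \varphi^{n_i}$ is $\delta_i$-close to $\Id$, one has $d(g_i^{\,j} x, x) \le j \delta_i$, whence $\max_{0 \le k < J_i n_i} d(\varphi^k x, \varphi^k y) \le \max_{0 \le s < n_i} d(\varphi^s x, \varphi^s y) + 2 J_i \delta_i \le d_{n_i}(x,y) + \epsilon/2$; consequently every $(n_i, \epsilon/2)$-spanning set is $(J_i n_i, \epsilon)$-spanning, so $r(J_i n_i, \epsilon) \le r(n_i, \epsilon/2)$. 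Second, a crude Lipschitz bound: since $d_{n_i} \le L^{n_i} d$, comparison with the covering number of $(M, d)$ gives $r(n_i, \epsilon/2) \le C \, \epsilon^{-m} L^{m n_i}$ for a constant $C = C(M, g)$. Together, $\inf_n \tfrac1n \log r(n, \epsilon) \le \tfrac{1}{J_i n_i} \log r(J_i n_i, \epsilon) \le \tfrac{1}{J_i n_i}\big( \log C + m \log \tfrac1\epsilon + m n_i \log L \big)$, whose dominant term is $\tfrac{m \log L}{J_i}$. As $i \arr \infty$ we have $\delta_i \arr 0$, hence $J_i \arr \infty$, so the right-hand side tends to $0$; since the left-hand side is independent of $i$, it vanishes for every small $\epsilon$, and therefore $h_{\mathrm{top}}(\varphi) = 0$.

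The main obstacle is precisely the interplay between scales: the drift estimate is only useful on the finitely many time-windows on which $\varphi^{n_i}$ has not yet drifted past $\epsilon$, while the Lipschitz bound is wasteful on long times, so a naive separated-set computation of entropy (a $\limsup$ over \emph{all} $n$) leaves an irreducible contribution of size $\sim m \log L$ at the short times $n \approx n_i$. Switching to spanning sets and invoking subadditivity is what removes this difficulty, reducing the whole estimate to a single balanced time per index $i$; the balancing of the drift error $J_i \delta_i \sim \epsilon$ against the derivative growth rate $\tfrac{m \log L}{J_i}$ is then the only quantitative input, and it closes exactly because $\delta_i \arr 0$ forces $J_i \arr \infty$.
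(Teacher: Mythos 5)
Your proposal is correct, and at the top level it follows the same route as the paper: deduce the corollary from Theorem \ref{theo:C0-rigidity} together with the implication ``$C^0$ rigid $\Rightarrow$ zero topological entropy''. The difference is that the paper simply cites this implication from \cite[Sec.\ 2.4]{AFLXZ}, whereas you supply a self-contained proof of it; your drift-plus-Lipschitz estimate at the balanced time $n = J_i n_i$ is sound, and this is a worthwhile addition if one wants the argument to be complete. One technical point needs repair: $\log r(n,\epsilon)$ is \emph{not} exactly subadditive in $n$ for fixed $\epsilon$, so Fekete's lemma does not literally apply. Concatenating an $(n,\epsilon)$-spanning set with an $(m,\epsilon)$-spanning set only produces an $(n+m,2\epsilon)$-spanning set, i.e.\ one has the quasi-subadditivity $r(n+m,2\epsilon) \le r(n,\epsilon)\,r(m,\epsilon)$. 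This is, however, enough: iterating it gives $\limsup_n \tfrac1n \log r(n,2\epsilon) \le \inf_{n_0} \tfrac{1}{n_0}\log r(n_0,\epsilon)$, and letting $\epsilon \to 0$ yields the identity $h_{\mathrm{top}}(\varphi) = \lim_{\epsilon\to 0}\inf_n \tfrac1n \log r(n,\epsilon)$, which is the only fact your argument actually uses. With that standard substitution, the reduction to exhibiting a single good time per $\epsilon$, and hence the whole proof, goes through. (A last cosmetic remark: the Lipschitz bound via $\vv D\varphi\vv$ is available here since $\varphi$ is a diffeomorphism, but the implication holds for homeomorphisms as well by replacing $L^{m n_i}$ with $r(1,\epsilon/4)^{n_i}$, which is the generality in which \cite{AFLXZ} states it.)
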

 \begin{proof}
 It is known that every $C^0$ rigid map has zero topological entropy; see, for example, \cite[Sec.\ 2.4]{AFLXZ}.  
 \end{proof}
 Observe that, by the same reasoning, an affirmative answer to Question \ref{que:C0-rigidity} entails a positive answer to Question \ref{que:entropy}.
 
 \medskip 
 
 Theorems \ref{theo:inequality} \& \ref{theo:C0-rigidity} motivate another interesting question on the growth rate of the derivatives of an AKPR.  
 
  \begin{question}\label{que:growth-rate}
 Let $\varphi$ be an AKPR.  Is it the case that   $$\limsup \frac{\mathrm{log} (\vv D\varphi^n \vv)}{n} = 0?$$
 \end{question}
 
 Once again, in the case of surfaces the answer is known to be positive; this follows from \cite[Theorem C]{AFLXZ}; see also \cite[Sec.\ 2]{Crovisier-notes}.  However, as in the case of Question \ref{que:entropy}, this is open in higher dimensions. Finally, we remark that this question is intimately related to Question \ref{que:entropy} because the topological entropy of any diffeomorphism $\varphi$ is bounded from above by \cite{Ito}   $$ \mathrm{dim}(M)  \limsup \frac{\mathrm{log} (\vv D\varphi^n \vv)}{n}.$$

\subsection*{Acknowledgements} The proof of Theorem \ref{theo:inequality} is inspired by an argument of Avila, Fayad, Le Calvez, Xu and Zhang \cite{AFLXZ}.  We are grateful to Patrice Le Calvez for bringing this argument to our attention.  We warmly thank Viktor Ginzburg and Basak G\"urel for an extensive discussion on the subject of pseudo-rotations as well as for suggesting Corollary \ref{corol:entropy}.  We express our sincere gratitude to Barney Bramham for his careful reading of the first version of the article and for numerous helpful suggestions which have improved our results.  We are also grateful to David Burguet and Sylvain Crovisier for helpful conversations on Questions \ref{que:entropy} \& \ref{que:growth-rate}. Finally, we thank Pierre Berger, Erman \c{C}ineli, Fr\'ed\'eric Le Roux, Abror Pirnapasov and Egor Shelukhin for helpful comments and conversations.  

Both authors are supported by the ERC Starting Grant number 851701.


\section{Preliminaries}\label{sec:prelim}
In this section we introduce some of our notation and recall the necessary background from symplectic geometry.

Throughout the section $(M, \omega)$  denotes a closed and connected symplectic manifold.  Recall that a symplectomorphism is a diffeomorphism $\varphi: M \to M$ such that $\varphi^* \omega = \omega$.  We denote by $\Symp(M, \omega)$ the set of all symplectomorphisms of $(M, \omega)$.  We let $\Symp_0(M, \omega)$ denote those elements of  $\Symp(M, \omega)$ which are isotopic to the identity.

  Hamiltonian diffeomorphisms constitute an important class of examples of symplectic diffeomorphisms which are defined as follows. A smooth Hamiltonian $H \in C^{\infty} ([0,1] \times M)$  gives rise to a time-dependent vector field $X_H$ which is defined via the equation: $\omega(X_H(t), \cdot) = -dH_t$.  The Hamiltonian flow of $H$, denoted by  $\phi^t_H$, is by definition the isotopy generated by $X_H$.  A  Hamiltonian diffeomorphism is a diffeomorphism which arises as the time-one map of a Hamiltonian flow.  The set of all Hamiltonian diffeomorphisms is denoted by $\Ham(M, \omega)$; it forms a (normal) subgroup of $\Symp_0(M, \omega)$.

\subsection{The Hofer metric}
The {\bf Hofer norm} $\vv \varphi \vvh$ of any $\varphi \in \Ham(M,\omega)$ is defined as follows.  First, to a Hamiltonian $G \in C^{\infty}([0,1] \times M)$, we associate the (pseudo)norm
\[ \vv G \vv_{1,\infty} := \int_0^1 \left( \max_{M}(G_t) - \min_{M}(G_t) \right) dt.\]
We then define
\[ \vv \varphi \vvh := \inf \lbrace \vv G \vv_{1,\infty} \hspace{1mm} \vert \hspace{1.25mm} \varphi = \phi^1_G \rbrace.\]
This quantity is invariant under conjugation, i.e.~$\vv \psi^{-1} \varphi \psi  \vvh = \vv \varphi \vvh$ because  $\phi^t_{H\circ\psi} = \psi^{-1} \phi^t_H \psi$; see \cite[Sec. 5.1, Prop. 1]{hofer-zehnder}, for example. 

We can define a metric on $\Ham(M,\omega)$, the {\bf Hofer metric}, by
\[ d_{\it Hof}(\varphi,\psi) = \vv \varphi^{-1} \circ \psi \vvh .\]
As mentioned above, this yields a non-degenerate, bi-invariant metric known as the Hofer metric.  Non-degeneracy, which was established by Hofer for $\R^{2n}$ \cite{Hofer-metric}, by Polterovich for rational symplectic manifolds \cite{Polterovich93}, and by Lalonde-McDuff in full generality \cite{Lalonde-McDuff}, is a consequence of the celebrated energy-capacity inequality:   suppose that $U \subset M$ is symplectomorphic to the closed Euclidean ball of radius $r$. If $\varphi \in \Ham (M,\omega)$ displaces $U$, that is $\varphi(U) \cap U = \emptyset$, then
\begin{equation}\label{eq:energy-capacity-hofer}
    \pi r^2  \leq  \vv \varphi \vvh.
\end{equation}

The above version of the energy-capacity inequality is due to Usher \cite{usher-sharp}.

\subsection{The spectral metric }
The group $\Ham(M, \omega)$ admits another conjugation invariant norm known as the {\bf spectral norm}.  It is usually denoted by $\gamma :\Ham(M, \omega) \rightarrow \R$ and it was constructed by Viterbo \cite{viterbo92} on $\R^{2n}$, via generating function theory, by Schwarz \cite{schwarz} on aspherical $(M, \omega)$ and by Oh \cite{oh} on general $(M, \omega)$ using Hamiltonian Floer theory.  

Being a conjugation invariant norm means that $\gamma$ satisfies the following properties:

\begin{enumerate}
    \item  $\gamma(\varphi) \geq 0$ with equality only if $\varphi = \Id$,
    \item $\gamma(\varphi) = \gamma(\varphi^{-1})$,
    \item  $\gamma(\varphi \psi) \leq \gamma(\varphi) + \gamma(\psi)$,
    \item  $\gamma(\psi^{-1}\varphi \psi) = \gamma(\varphi)$. 
\end{enumerate}
As before, we can define a metric on $\Ham(M,\omega)$, the {\bf spectral metric}, by
\[ d_\gamma(\varphi,\psi) = \gamma (\varphi^{-1} \circ \psi) .\]
As in the case of the Hofer metric, non-degeneracy (i.e.~the first item on the above list) is a consequence of the energy-capacity inequality for $\gamma$:  suppose that $U \subset M$ is symplectomorphic to the closed Euclidean ball of radius $r$. If $\varphi \in \Ham (M, \omega)$ displaces $U$, that is $\varphi(U) \cap U = \emptyset$,  then

\begin{equation}\label{eq:energy-capacity-gamma}
    \pi r^2  \leq  \gamma(\varphi).
\end{equation}

The spectral norm is not easy to compute except in very specific cases. For example, if $H$ is a sufficiently $C^2$-small autonomous Hamiltonian then by \cite[Proposition 4.1]{usher-sharp} we have that %
\begin{equation}\label{eq:gamma-small}
    \gamma(\phi^1_H) = \max_M H - \min_M H.
\end{equation}

\subsubsection*{Comparison to Hofer's metric}
It is known that the spectral metric is smaller than Hofer's metric : $d_\gamma \leq d_{Hof}$.  

However, in certain cases the two metrics coincide:  if $H$ is a sufficiently $C^2$-small autonomous Hamiltonian, then it follows from Equation \eqref{eq:gamma-small} that 

\begin{equation}\label{eq:gamma=Hofer}
    \gamma(\phi^1_H) = \vv \phi^1_H \vvh = \max_M H - \min_M H.
\end{equation}

\medskip

In this article we will only rely on the above properties of the spectral metric and so we will not recall its definition which is quite involved and relies on the machinery of filtered Floer homology and the theory of spectral invariants.

\subsection{Relation between the Hofer/spectral metric and $d_{C^0}$}
 
We recall here some facts about the behaviour of the Hofer, or the spectral, metric with respect to the $C^0$ topology. 

  As far as we know, there exists no example of a compact symplectic manifold on which the Hofer metric is $C^0$ continuous.  This can easily be seen in the case of the unit ball in $\R^{2n}$ by considering a sequence of Hamiltonian diffeomorphisms whose supports shrink to a point (hence $C^0$ convergence to the $\Id$) and whose Calabi invariant, which bound the Hofer norm from below, are bounded away from zero.    Despite this, the Hofer metric still displays some interesting properties with respect to the $C^0$ topology \cite{EPP, Sey_descent, BHS, Kawamoto} and, as Le Roux suggests \cite{Leroux-six}, it might even be lower semi-continuous in the $C^0$ topology.

As for the spectral metric $\gamma$, it is known to be $C^0$ continuous on a large class of manifolds: this was proven for $\R^{2n}$ by Viterbo \cite{viterbo92}, closed surfaces by the second author \cite{SeyC0}, closed aspherical manifolds by Buhovsky-Humilière-Seyfaddini \cite{BHS}, $\C P^n$ by Shelukhin \cite{Shelukhin-18} and negative monotone manifolds by Kawamoto \cite{Kawamoto}.  It is expected that these results extend to arbitrary symplectic manifolds. 

In the opposite direction, given two distinct points $x,y$, it is fairly easy to construct $\varphi_n \in \Ham (M,\omega)$ such that $\varphi_n(x) = y$ and $d_{\it Hof}(\varphi_n, \Id ) \to 0$.  Clearly, since the Hofer norm is larger than $\gamma$, we also have $\gamma(\varphi_n) \to 0$. So we see that $d_{C^0}$ is not continuous with respect to either of the Hofer or $\gamma$ metrics.  Observe that, as a consequence of our Theorem \ref{theo:inequality}, we can conclude that $\vv D\varphi_n \vv \to \infty $.

\section{Proof of the inequality}\label{sec:proof_inequality}

We present here the proof of Theorem \ref{theo:inequality}.  The heart of the argument is a local version of the theorem which we state separately as a claim.

\begin{claim}\label{cl:inequality_local}
Let $(M, \omega)$ be a closed and connected symplectic manifold and $g$ be a Riemannian metric on it. There exist constants $C := C(M,\omega,g) > 0$ and $\delta := \delta(M,\omega,g)>0$ such that the following holds. For every $\varphi \in \Ham (M,\omega)$ such that $\gamma(\varphi) < \delta$ we have
$$d_{C^0}(\varphi, \Id) \leq C \sqrt{\gamma(\varphi)} \; \vv D \varphi \vv.$$
\end{claim}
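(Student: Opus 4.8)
The plan is to bound the displacement of a maximally displaced point by comparing the displacement energy of a small metric ball around it with $\gamma(\varphi)$, via the energy--capacity inequality \eqref{eq:energy-capacity-gamma}. Write $\rho := d_{C^0}(\varphi, \Id)$ and set $L := \vv D\varphi \vv$. By compactness of $M$ there is a point $x_0$ with $d(\varphi(x_0), x_0) = \rho$, and by integrating $D\varphi$ along minimizing geodesics the bound $L$ makes $\varphi$ globally $L$-Lipschitz for $d$. I would also record at the outset that $L \geq 1$: a diffeomorphism of a compact connected manifold of positive dimension cannot be a strict contraction (that would shrink the diameter and contradict surjectivity), so $\sup_x \vv D\varphi_x \vv \geq 1$.

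First I would establish a uniform Darboux estimate: by compactness of $M$ together with Darboux's theorem, there exist constants $r_0 > 0$ and $\kappa \in (0,1]$, depending only on $(M,\omega,g)$, such that for every $x \in M$ and every $0 < r < r_0$ the metric ball $B(x,r)$ contains a subset symplectomorphic to the closed Euclidean ball of radius $\kappa r$. The constant $\kappa$ simply absorbs the uniform comparison, over the compact manifold, between $g$ and the Euclidean metric of a Darboux chart in which $\omega$ is standard.

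Next comes the displacement step. For $0 < r < r_0$ the Lipschitz bound gives $\varphi(B(x_0,r)) \subseteq B(\varphi(x_0), Lr)$, so whenever $\rho > (1+L)r$ the balls $B(x_0,r)$ and $B(\varphi(x_0),Lr)$ are disjoint and $\varphi$ displaces $B(x_0,r)$, hence also the Euclidean ball of radius $\kappa r$ it contains. The inequality \eqref{eq:energy-capacity-gamma} then yields $\pi \kappa^2 r^2 \leq \gamma(\varphi)$ for every admissible radius, i.e.\ every $r$ with $r < r_0$ and $r < \rho/(1+L)$. Setting $\delta := \pi \kappa^2 r_0^2$, the hypothesis $\gamma(\varphi) < \delta$ forces $\rho/(1+L) < r_0$ (otherwise all $r < r_0$ would be admissible and letting $r \to r_0$ would give $\delta \leq \gamma(\varphi)$). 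Consequently every $r < \rho/(1+L)$ is admissible, and letting $r \to \rho/(1+L)$ gives $\pi \kappa^2 \rho^2/(1+L)^2 \leq \gamma(\varphi)$, that is $\rho \leq \frac{1+L}{\kappa \sqrt{\pi}} \sqrt{\gamma(\varphi)}$. Finally, using $1+L \leq 2L$ (valid as $L \geq 1$) gives $\rho \leq C \sqrt{\gamma(\varphi)}\, \vv D\varphi \vv$ with $C := 2/(\kappa\sqrt{\pi})$.

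The hard part is the uniform Darboux estimate of the second paragraph: one must produce an honest symplectically embedded Euclidean ball inside each small metric ball, with a radius ratio $\kappa$ that is bounded below independently of the center $x$. Once this uniformity is in hand, the rest is the elementary interplay between the Lipschitz constant, the displacement condition, and the energy--capacity inequality. I would expect the same scheme to give the $\R^{2n}$ refinement in Remark \ref{rem:observations}, where global Darboux coordinates allow $\kappa = 1$ and remove the restriction $r < r_0$.
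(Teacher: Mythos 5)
Your proof is correct and rests on essentially the same ingredients as the paper's: a uniform Darboux/compactness estimate placing a symplectically embedded Euclidean ball of comparable radius inside every small metric ball, the energy--capacity inequality \eqref{eq:energy-capacity-gamma}, and the Lipschitz control supplied by $\vv D\varphi \vv$. The only difference is organizational: you argue contrapositively at a maximally displaced point (large displacement forces a displaced ball, hence a lower bound on $\gamma(\varphi)$), whereas the paper fixes a metric ball of radius proportional to $\sqrt{\gamma(\varphi)}$ around each point, deduces that it cannot be displaced, and extracts the $C^0$ bound from an intersection point via the triangle inequality --- the two routes are equivalent and give the same constants up to a bounded factor.
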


Let us explain why the above claim implies the theorem.  Since the manifold $M$ is compact its diameter, $\displaystyle \mathrm{diam(M)}:= \sup_{x,y \in M} d(x,y)$ is bounded.  Without loss of generality, we may suppose that the constant $C$ from the above claim satisfies the inequality
\begin{equation}\label{eqn:bound_C}
   C \geq  \frac{\mathrm{diam}(M)}{\sqrt{\delta}}.
\end{equation}

Claim \ref{cl:inequality_local} proves Theorem \ref{theo:inequality} in the case where $\gamma(\varphi) < \delta$. To complete the proof, suppose that  $\gamma(\varphi) \geq \delta$.  Then, note that by \eqref{eqn:bound_C}

$$ C \sqrt{\gamma(\varphi)} \; \vv D \varphi \vv \geq \mathrm{diam}(M) \; \vv D \varphi \vv \geq  \mathrm{diam}(M), $$ 
where the right-most inequality follows from the fact that, with $\varphi$ being symplectic, we have $\vv D \varphi \vv \geq 1$.
Clearly, $\mathrm{diam}(M)$ is an upper bound for $d_{C^0}$ and so we conclude from the above inequality that
$$d_{C^0}(\varphi, \Id) \leq C \sqrt{\gamma(\varphi)} \; \vv D \varphi \vv.$$

It remains to prove Claim \ref{cl:inequality_local}.
\begin{proof}[Proof of Claim \ref{cl:inequality_local}]
Let $x \in M.$ Choose a Darboux chart $\psi_x: U_x \arr \R^{2n}$ about $x.$ 
Choose compact neighbourhoods $K_x$ and $K_{x}'$ of $x$ such that 
$$K_x \subseteq \operatorname{int} (K_x') \subseteq K_x' \subseteq U_x$$
Then $\{\operatorname{int} (K_x) \}_{x \in M}$ covers $M$ and since $M$ is compact we can choose a finite subcover $\{\operatorname{int} (K_i)\}_{1 \leq i \leq k}.$
Define 
\begin{align}
    &\varepsilon := \min_{1 \leq i \leq k} d(\partial K_i, \partial K_i') \label{eps} \\
    & L: = \max_{1 \leq i \leq k} \vv D \psi_i^{-1} \vert_{\psi_i(K_i')} \vv, \label{L}
\end{align}
where $\vv \cdot \vv$ is the operator norm with respect to $g$ and the Euclidean metric.
Roughly, one can understand the constant $L$ as a uniform Lipschitz constant for the maps $\psi_i^{-1} \vert_{\psi_i(K_i')}$ with respect to the Euclidean distance and the metric $d$ (induced by $g$).  

Define 
\begin{equation}\label{delta}
    \delta := \frac{\pi \varepsilon^2}{4L^2}, \quad C := \frac{8L}{\sqrt{\pi}}. 
\end{equation}
We prove that $C$ and $\delta$ satisfy the statement of Theorem \ref{theo:inequality}. 
For that let $\varphi \in \Ham (M,\omega)$ such that $\gamma (\varphi) < \delta.$ 
Let $x \in M$ and $i$ such that $x \in \operatorname{int} (K_i).$ Let $B_{r}(\psi_i(x)) \subseteq \R^{2n}$ be the ball of radius 
$r := 2\sqrt{\frac{\gamma(\varphi)}{\pi}} < \frac{\varepsilon}{L}$ about $\psi_i(x).$ 
Since $x \in K_i$ from \eqref{eps},\eqref{L} it follows that $B_{r}(\psi_i(x))$ lies inside the Darboux chart $\psi_i(K_i').$ Consider the set  $\psi^{-1}_i (B_{r}(\psi_i(x))) $; for any point $p \in \psi^{-1}_i (B_{r}(\psi_i(x))) $ we have $d(x,p) \leq Lr =  2L\sqrt{\frac{\gamma(\varphi)}{\pi}},$ and so 
\begin{equation*}
    \psi^{-1}_i (B_{r}(\psi_i(x))) \subset B_x,
\end{equation*}
where $B_x \subset M$ denotes the ball of radius $2L\sqrt{\frac{\gamma(\varphi)}{\pi}}$ centered at $x$ (w.r.t. metric $d$).

From the above containment and the energy-capacity inequality \eqref{eq:energy-capacity-gamma} we conclude that $\varphi$ cannot displace $B_x,$ i.e.
$$\varphi (B_x) \cap B_x \neq \emptyset.$$
Choose $y = \varphi(z) \in \varphi (B_x) \cap B_x.$
Then, we have 
\begin{align*}
    d(x, \varphi (x)) & \leq d(x,y) + d(y, \varphi(x)) \\
                      & = d(x,y) + d(\varphi(z), \varphi(x)) \\
                      & \leq \mathrm{diam}(B_x) + \sup_{x \in M} \; \mathrm{diam}(\varphi(B_x)),
                      \end{align*}
where $\mathrm{diam}$ denotes the diameter of the set.                      
We conclude that 

\begin{equation}\label{eq:C0-estimate}
d_{C^0}(\varphi, \Id) \leq 4L\sqrt{\frac{\gamma(\varphi)}{\pi}} + \sup_{x \in M} \; \mathrm{diam}(\varphi(B_x)).
\end{equation}

Since $\varphi$ is differentiable, we have  $\mathrm{diam}(\varphi(B_x)) \leq \vv D\varphi \vv \, \mathrm{diam}(B_x) \leq \vv D\varphi \vv \cdot 4L\sqrt{\frac{\gamma(\varphi)}{\pi}}$.  Hence, we obtain
\begin{align*}
                      d_{C^0}(\varphi, \Id) 
                      & \leq  4L\sqrt{\frac{\gamma(\varphi)}{\pi}}(1+ \vv D\varphi \vv) \\
                      & \leq C\sqrt{\gamma(\varphi)} \hspace{0.7mm} \vv D\varphi \vv,
\end{align*}
where the last inequality follows from the fact that $D\varphi$ is a symplectic map and hence $\vv D\varphi \vv \geq 1$ (since it has at least one eigenvalue of norm greater than or equal to $1$).

This completes the proof of Theorem \ref{theo:inequality}.
\end{proof}

\begin{remark} \label{rem:homeos} \normalfont
In the above proof, the differentiability of the map $\varphi$ is used only in the very last step.  The inequality \eqref{eq:C0-estimate} holds for Hamiltonian homeomorphisms, assuming that $\gamma$ can be defined for such homeomorphisms (which is the case for a large class of symplectic manifolds; see \cite{SeyC0, BHS, Shelukhin-18, Kawamoto}.)
\end{remark}

\section{Proofs of the rigidity results} \label{sec:proofs}

Throughout this section we suppose that $(M, \omega)$  is a closed and connected symplectic manifold of dimension $2n$ admitting an AKPR $\varphi$ as in Definition \ref{def:AKPR}.  Recall that this means that $(M, \omega)$ admits a Hamiltonian torus action $R: \mathbb T^k \rightarrow \Ham(M, \omega)$ and, moreover,  there exist sequences $h_m \in \Symp (M,\omega)$ and $\alpha_m \in \mathbb{T}^k$ such that
$h_m^{-1} R_{\alpha_m} h_m \scon  \varphi.$

Before proceeding to prove our results, we introduce some notation.
For any fixed $ \alpha \in \mathbb{T}^k= \R^k / \Z^k$, we refer to the map 
$R_{\alpha}$ as \emph{the rotation in the direction} $\alpha.$   Each of the circle factors in the torus  $\mathbb{T}^k= \R^k / \Z^k$ yields a Hamiltonian circle action,
or equivalently a periodic Hamiltonian flow with period 1; let $H_1, \ldots, H_k \in C^\infty(M)$ be (autonomous) 
 Hamiltonians generating these circle actions.  Denote by   $$\mu := (H_1, ..., H_k): M \arr \R^k$$ the corresponding momentum map of the action, and define 
\begin{equation*} 
    \vv \mu \vv_{\infty} := \max_{1 \leq i \leq k} \vv H_i \vv_{\infty}. 
\end{equation*}
%
%



\medskip

We begin with the following lemmas which will be used repeatedly in the proofs of main results.

\begin{lemma}\label{lem:hofer-equality}
Let $\alpha$ be any accumulation point of the sequence $\{\alpha_m\}.$ Then, for every integer $j$, we have $\vv \varphi^j \vvh = \vv R_{j\alpha}\vvh$.
\end{lemma}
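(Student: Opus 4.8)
The plan is to combine the conjugation invariance of the Hofer norm with its continuity in the $C^\infty$ topology, together with the explicit description of the rotations $R_\beta$ via the momentum map. First I would fix $j\in\Z$ and pass to a subsequence, still denoted $\{\alpha_m\}$, with $\alpha_m\to\alpha$. Since $R$ is a group morphism, $R_{\alpha_m}^{\,j}=R_{j\alpha_m}$, and since composition and inversion are continuous in the $C^\infty$ topology, raising the defining convergence $h_m^{-1}R_{\alpha_m}h_m\scon\varphi$ to the $j$-th power yields
$$\psi_m := h_m^{-1} R_{j\alpha_m} h_m \scon \varphi^j.$$
By conjugation invariance of the Hofer norm, $\vv \psi_m \vvh = \vv R_{j\alpha_m} \vvh$ for every $m$. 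Hence it suffices to establish two limits: (i) $\vv R_{j\alpha_m} \vvh \to \vv R_{j\alpha} \vvh$ and (ii) $\vv \psi_m \vvh \to \vv \varphi^j \vvh$; chaining these gives $\vv \varphi^j \vvh = \vv R_{j\alpha} \vvh$.

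For (i), I would use that the Poisson-commuting Hamiltonians $H_1,\dots,H_k$ generate the torus action, so for $\beta\in\R^k$ the rotation $R_\beta$ is the time-one map of $H_\beta := \sum_{i=1}^k \beta_i H_i$. Writing $\delta_m\in\R^k$ for representatives of $\alpha-\alpha_m$ with $\delta_m\to 0$, bi-invariance of $d_{\it Hof}$ gives
$$\bigl| \vv R_{j\alpha_m} \vvh - \vv R_{j\alpha} \vvh \bigr| \le \vv R_{j\delta_m} \vvh \le \vv H_{j\delta_m} \vv_{1,\infty} \le 2|j|\,k\,|\delta_m|\,\vv \mu \vv_{\infty} \longrightarrow 0,$$
using that $H_{j\delta_m}$ generates $R_{j\delta_m}$ and that its oscillation is controlled by $\vv \mu \vv_{\infty}$. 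This settles (i).

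The main obstacle is (ii), which amounts to the continuity of the Hofer norm with respect to the $C^\infty$ topology. Since $\psi_m\scon\varphi^j$, the diffeomorphisms $\varphi^{-j}\psi_m$ converge to $\Id$ in $C^\infty$, so (ii) would follow from the claim that $\phi_m\scon\Id$ forces $\vv \phi_m \vvh\to 0$: indeed then $\bigl| \vv \psi_m \vvh - \vv \varphi^j \vvh \bigr| \le d_{\it Hof}(\psi_m,\varphi^j)\to 0$. To prove the claim I would invoke the Weinstein neighbourhood theorem for the diagonal in $M\times \overline{M}$: for $\phi\in\Ham(M,\omega)$ sufficiently $C^1$-close to $\Id$, its graph is the graph of an exact one-form $dF_\phi$ in a fixed Weinstein chart, with $\vv F_\phi \vv_{C^1}\to 0$ as $\phi\to\Id$ in $C^1$. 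Interpolating through the graphs of $t\,dF_\phi$ for $t\in[0,1]$ produces a Hamiltonian isotopy from $\Id$ to $\phi$ whose generating Hamiltonian has $\vv \cdot \vv_{1,\infty}$ bounded (up to a chart-dependent constant) by $\vv F_\phi \vv_{C^1}$, whence $\vv \phi \vvh\to 0$. Applying this to $\phi_m=\varphi^{-j}\psi_m$ delivers (ii). I expect this continuity step, rather than the soft bookkeeping in (i), to be the crux, since it is the only place where one must go beyond the formal properties of the Hofer norm and use the symplectic geometry of a neighbourhood of the diagonal.
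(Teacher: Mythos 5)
Your proposal is correct and follows essentially the same route as the paper's proof: conjugation invariance gives $\vv h_m^{-1}R_{\alpha_m}^j h_m\vvh=\vv R_{j\alpha_m}\vvh$, and the lemma then reduces to the two limits $\vv R_{j\alpha_m}\vvh\to\vv R_{j\alpha}\vvh$ and $\vv h_m^{-1}R_{\alpha_m}^j h_m\vvh\to\vv\varphi^j\vvh$, exactly as in the paper. The only difference is that you supply the justifications (the momentum-map estimate for the first limit and the Weinstein-neighbourhood argument for $C^\infty$-continuity of the Hofer norm near $\Id$) that the paper leaves implicit.
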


\begin{proof}[Proof of Lemma \ref{lem:hofer-equality}]
After possibly passing to a subsequence we may assume that $\alpha_m \arr\alpha$.  For every integer $j$, we have 

\begin{equation}\label{rot-Hor-conv0}
    \vv h_m^{-1} R^j_{\alpha_m}  h_m\vvh = \vv R_{\alpha_m}^j \vvh \larr \vv R_{\alpha}^j \vvh = \vv R_{j\alpha} \vvh.
\end{equation}
On the other hand, since $h_m^{-1} R_{\alpha_m}  h_m \scon \varphi$ it follows that
$$h_m^{-1} R_{\alpha_m}^j  h_m = (h_m^{-1} R_{\alpha_m}  h_m)^j \scon \varphi^j,$$ and therefore
\begin{equation} \label{pr-Hof-conv0}
\vv h_m^{-1} R^j_{\alpha_m}  h_m \vvh \larr \vv \varphi^j \vvh.
\end{equation}
The lemma follows immediately from \eqref{rot-Hor-conv0} and \eqref{pr-Hof-conv0}.
\end{proof}


\begin{lemma}\label{lem:hofer-inequality}
Let $\alpha \in \mathbb{T}^k = \R^k / \Z^k.$ Then, the following holds
\begin{equation} \label{hof_estimate}
    \vv R_{\alpha} \vvh \leq  2k\vv \alpha \vv \cdot \vv \mu \vv_{ \infty}, 
\end{equation}
where $\vv \cdot \vv$ is as in \eqref{distance}.
\end{lemma}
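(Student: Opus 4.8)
The plan is to exploit the group structure of the torus action to reduce the estimate to the $k$ individual circle factors, and then to bound the Hofer norm of each factor by the autonomous Hamiltonian Hofer estimate. Since $R \colon \T^k \to \Ham(M,\omega)$ is a group morphism and the circle actions generated by $H_1, \dots, H_k$ pairwise commute, for $\alpha = (\alpha_1, \dots, \alpha_k)$ I would first write
$$R_\alpha = \phi^{\alpha_1}_{H_1} \circ \phi^{\alpha_2}_{H_2} \circ \cdots \circ \phi^{\alpha_k}_{H_k},$$
where each $\phi^{\alpha_i}_{H_i}$ is the time-$\alpha_i$ map of the period-$1$ flow generated by $H_i$. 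By the triangle inequality for the Hofer norm, it then suffices to show that $\vv \phi^{\alpha_i}_{H_i} \vvh \leq 2 \vv \alpha \vv \cdot \vv \mu \vv_{\infty}$ for each $i$, since summing over the $k$ factors produces the factor $2k$.

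To bound a single factor, the key observation is to use the periodicity of the flow to replace $\alpha_i$ by its shortest representative. I would choose a lift $\tilde\alpha_i \in [-1/2, 1/2]$ of $\alpha_i \in S^1 = \R/\Z$, so that $|\tilde\alpha_i| = |\alpha_i|$, where $|\cdot|$ is the distance from $0$ on $S^1$. Since the flow has period $1$, we have $\phi^{\alpha_i}_{H_i} = \phi^{\tilde\alpha_i}_{H_i}$, and rescaling time yields $\phi^{\tilde\alpha_i}_{H_i} = \phi^1_{\tilde\alpha_i H_i}$. Consequently, using the definition of the Hofer norm together with the fact that $\tilde\alpha_i H_i$ is autonomous,
$$\vv \phi^{\alpha_i}_{H_i} \vvh \leq \vv \tilde\alpha_i H_i \vv_{1,\infty} = |\tilde\alpha_i| \left( \max_M H_i - \min_M H_i \right) \leq 2 |\alpha_i| \, \vv H_i \vv_{\infty} \leq 2 \vv \alpha \vv \cdot \vv \mu \vv_{\infty}.$$
Combining this with the triangle inequality from the first step completes the argument.

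The argument is essentially routine once the right normalization is in place, so I do not expect a serious obstacle; the only point requiring care is the use of the $1$-periodicity of the flow in the second step. This is what guarantees that the estimate is governed by the circle distance $|\alpha_i|$, and hence by $\vv \alpha \vv$, rather than by an arbitrary real lift of $\alpha_i$. A naive bound in terms of an unreduced lift would both overcount and fail to respect the fact that $R_\alpha$ depends only on $\alpha \in \T^k$, so recording $\phi^{\alpha_i}_{H_i} = \phi^1_{\tilde\alpha_i H_i}$ with $|\tilde\alpha_i| = |\alpha_i|$ is the step that makes the inequality both correct and well-defined on $\T^k$.
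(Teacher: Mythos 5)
Your proof is correct and follows essentially the same route as the paper: the paper writes $R_\alpha$ as the time-one map of the single autonomous Hamiltonian $H_\alpha = \sum_i \pm|\alpha_i| H_i$ and bounds $\vv H_\alpha\vv_{(1,\infty)}$ by the sum of the individual oscillations, which is just your factor-by-factor triangle inequality carried out at the level of generating Hamiltonians rather than of the Hofer norm. The key normalization you emphasize (using the shortest lift $\tilde\alpha_i$ with $|\tilde\alpha_i|=|\alpha_i|$) is exactly the paper's choice of the generator $\pm|\alpha_i|H_i$.
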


\begin{proof}
Let $\alpha = (\alpha_1,..., \alpha_k) \in \mathbb{T}^k,$ and denote by $\mu = (H_1,...,H_k) :  M \arr \R^k$ the momentum map.
Notice that the rotation in the direction $(0,...,\alpha_i,0,...,0)$ is generated by the Hamiltonian $\pm \vert \alpha_i \vert H_i,$ for every $1 \leq i \leq k$; recall that $\vert \cdot \vert$ is the distance from $0$. 
Hence, the Hamiltonian 
\begin{equation} \label{hamiltonian}
    H_{\alpha}  := \sum_{i=1}^k \pm \vert \alpha_i \vert \cdot H_i 
\end{equation} 
generates the rotation $R_{\alpha}.$
Therefore, 
$$\vv R_{\alpha} \vvh \leq \vv H_{\alpha} \vv_{(1,\infty)}  \leq \sum_{i=1}^k \vert \alpha_i \vert \cdot \vv H_i \vv_{(1,\infty)} \leq 2k \vv \alpha \vv \cdot \vv \mu \vv_{\infty}.$$
This completes the proof of Lemma \ref{lem:hofer-inequality}.
\end{proof}

\subsection{Further remarks on the limit points of the sequence $\{\alpha_m\}$. }
Let $\varphi$ be an AKPR and $h_m \in \Symp (M, \omega),$ $\alpha_m \in \T^k$ sequences such that
$h_m^{-1} R_{\alpha_m} h_m \scon  \varphi.$ The main goal of this section is to establish the following result concerning the limit points of the sequence $\{\alpha_m\}$.  The contents of this section are not used elsewhere in the paper.

\begin{proposition}\label{prop:rot_num}
Let $\alpha \in T^k = \R^k / \Z^k$ be a limit point of the sequence $\{\alpha_m\}$.  Then, $\alpha$ has at least one irrational coordinate. 
Moreover, in the case when $k=1,$ if $\alpha'$ is another limit point of the same sequence, then one 
of $\{\alpha - \alpha', \alpha + \alpha'\}$ is rational. 
\end{proposition}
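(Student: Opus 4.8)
The plan is to leverage Lemma~\ref{lem:hofer-equality}, the non-degeneracy of the Hofer norm, and the local formula \eqref{eq:gamma=Hofer} for the Hofer norm of small rotations. For the \emph{first statement} I would argue by contradiction: suppose every coordinate of the limit point $\alpha$ is rational. Then there is an integer $N \geq 1$ with $N\alpha \equiv 0$ in $\T^k$, hence $R_{N\alpha} = R_0 = \Id$. By Lemma~\ref{lem:hofer-equality}, $\vv \varphi^N \vvh = \vv R_{N\alpha} \vvh = 0$, and non-degeneracy of the Hofer norm forces $\varphi^N = \Id$. But then every point of $M$ is a periodic point of $\varphi$, and since $M$ is infinite this contradicts the fact that the pseudo-rotation $\varphi$ has only finitely many periodic points. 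Therefore $\alpha$ has at least one irrational coordinate.

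For the \emph{second statement}, assume $k=1$ and let $\alpha, \alpha' \in \T^1 = \R/\Z$ be two limit points. Since $\varphi$ is a pseudo-rotation it is not the identity, so the circle action is nontrivial and $c := \max_M H_1 - \min_M H_1 > 0$. Applying Lemma~\ref{lem:hofer-equality} to both $\alpha$ and $\alpha'$ gives $\vv R_{j\alpha} \vvh = \vv \varphi^j \vvh = \vv R_{j\alpha'} \vvh$ for every integer $j$. Setting $g(\beta) := \vv R_\beta \vvh$, this reads $g(j\alpha) = g(j\alpha')$ for all $j \in \Z$. Moreover $R_\beta$ is generated by $\pm |\beta| H_1$, so for $|\beta|$ small this Hamiltonian is $C^2$-small and \eqref{eq:gamma=Hofer} yields the crucial local formula $g(\beta) = c\,|\beta|$, where $|\beta|$ denotes the distance from $0$ in $\R/\Z$.

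Next I would pass to the closed subgroup $\overline{G} := \overline{\{(j\alpha, j\alpha') : j \in \Z\}} \subseteq \T^2$. Since $g$ is continuous and the function $(x,y) \mapsto g(x) - g(y)$ vanishes on the dense subset $\{(j\alpha, j\alpha')\}$, it vanishes on all of $\overline{G}$, i.e.\ $g(x) = g(y)$ for every $(x,y) \in \overline{G}$. By the first statement $\alpha$ is irrational, so $\overline{G}$ is infinite and hence at least one-dimensional; it cannot equal $\T^2$, since otherwise $g$ would be constant, contradicting $g(0)=0 < g(\beta) = c|\beta|$ for small $\beta \neq 0$. Thus $\overline{G}$ is a finite union of parallel circles whose identity component is the subtorus $\{(ps, qs) \bmod \Z^2 : s \in \R\}$ for a primitive integer vector $(p,q)$; since $\alpha,\alpha'$ are irrational, $\overline{G}$ surjects onto each factor, which forces both $p$ and $q$ to be nonzero. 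Restricting $g(x) = g(y)$ to this subtorus gives $g(ps) = g(qs)$ for all $s$, and evaluating for small $s$ via the local formula gives $|p|\,|s| = |q|\,|s|$, whence $|p| = |q| = 1$ by primitivity.

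It then remains to read off the conclusion. If $(p,q) = \pm(1,1)$, the identity component is the diagonal, and the projection $\T^2 \to \T^2/\{(s,s)\} \cong \R/\Z$, $(x,y) \mapsto x-y$, sends $\overline{G}$ to a proper closed subgroup of $\R/\Z$ (proper, since otherwise $\overline{G} = \T^2$), which is therefore finite and hence consists of rational points; as $(\alpha,\alpha') \in \overline{G}$, this gives that $\alpha - \alpha'$ is rational. If instead $(p,q) = \pm(1,-1)$, the same argument with $(x,y) \mapsto x+y$ shows $\alpha + \alpha'$ is rational, so in either case one of $\{\alpha - \alpha', \alpha + \alpha'\}$ is rational. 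I expect the main obstacle to be exactly these last steps: extracting from the single scalar identity $g(j\alpha) = g(j\alpha')$ the rigid conclusion that the relation is $\alpha \pm \alpha'$, rather than some arbitrary rational dependence $b\alpha + c\alpha' \in \Z$. The decisive leverage that pins the direction down to $(1,\pm 1)$ is the \emph{local linearity} $g(\beta) = c|\beta|$ of the Hofer norm of small rotations.
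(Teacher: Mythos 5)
Your proof is correct. The first part coincides with the paper's argument: rationality of $\alpha$ would give $R_{m\alpha}=\Id$ for some $m$, hence $\vv \varphi^m \vvh =0$ by Lemma \ref{lem:hofer-equality} and $\varphi^m=\Id$, contradicting finiteness of the periodic point set. For the second part you use exactly the same two ingredients as the paper --- the identity $\vv R_{j\alpha}\vvh=\vv \varphi^j\vvh=\vv R_{j\alpha'}\vvh$ for all $j$, and the local linearity $\vv R_\eta \vvh=\vert\eta\vert(\max_M H-\min_M H)$ for small $\eta$ coming from \eqref{eq:gamma=Hofer} --- but you package them differently. The paper simply picks a single integer $l$ for which both $\vert\{l\alpha\}\vert$ and $\vert\{l\alpha'\}\vert$ are smaller than $\varepsilon$, reads off $\vert\{l\alpha\}\vert=\vert\{l\alpha'\}\vert$ from the local formula, and concludes $l(\alpha\mp\alpha')\in\Z$ in one line; you instead take the closure $\overline{G}$ of the joint orbit in $\T^2$, show it is a proper one-dimensional closed subgroup, and use the local linearity to force the slope of its identity component to be $\pm1$. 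Your route is longer, but it has one genuine advantage: it does not need, as an unproved input, the existence of simultaneous return times $l$ with $(l\alpha,l\alpha')$ near $(0,0)$ --- the paper asserts such an $l$ exists without comment, and the justification (a closed sub-semigroup of a compact group is a group) is precisely the structure your $\overline{G}$ makes explicit. Two small points you should still record to make the argument airtight: continuity of $g(\beta)=\vv R_\beta\vvh$ (it is Lipschitz by bi-invariance together with Lemma \ref{lem:hofer-inequality}), and the fact that $\alpha'$ is irrational, which is just the first part of the proposition applied to $\alpha'$.
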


\begin{remark*}[Rotation number of AKPRs arising from an $S^1$ action] \normalfont
When $k=1,$ Proposition \ref{prop:rot_num} above allows us to make sense of the rotation number of an AKPR which turns out to be well-defined as an element of $\R/\Q$, up to a sign. 
Moreover, it follows from the proposition that (in the case when $k=1$) if one of the limit points of the sequence $\{\alpha_m\}$ is exponentially Liouville then all of them are (for any choice of sequences $h_m$ and $\alpha_m$). This is a consequence of the fact that being exponentially Liouville is invariant under rational translations and multiplication by integers.
\end{remark*}

\begin{proof}[Proof of Proposition \ref{prop:rot_num}]
Let $\alpha \in \mathbb{T}^k = \R^k / \Z^k$ be an accumulation point of the sequence $\{\alpha_m\}.$ 
As a consequence of Lemma \ref{lem:hofer-equality}, we obtain that $\alpha$ has at least one irrational coordinate. Namely, assume to the contrary that $\alpha \in \Q^k / \Z^k.$ Then there exists $m \in \N$ such that $m\alpha = 0 \in \mathbb{T}^k$ and thus  $R_{\alpha}^m = R_{m\alpha} = \Id$.  From the lemma, we conclude that $\vv \varphi^m \vvh = 0$ and so $\varphi^m = \Id$, but this is not possible since $\varphi$ is a pseudo-rotation. Hence $\alpha$ has at least one irrational coordinate, i.e.~$\alpha \notin \Q^k / \Z^k.$

Now we prove the second part. For that we assume that $k=1$ (i.e. that we have a circle action). Assume that $\alpha$ and $\alpha'$ are two distinct accumulation points of $\{\alpha_m\}.$
Choose $\varepsilon>0$ such that $\varepsilon H$ is sufficiently $C^2$-small so that, by \eqref{eq:gamma=Hofer}, for every $\vert \eta \vert \leq \varepsilon,$ it holds that 

\begin{equation} \label{eq:Hof-small}
    \vv R_{\eta}  \vvh = \vert \eta \vert (\max_M H - \min_M H).
\end{equation}

Choose $l \in \N$ such that both $\vert \{l\alpha\} \vert$ and $\vert \{l\alpha'\} \vert$ \footnote{Here $\{x\}$ denotes the fractional part of $x \in \R.$ It is defined as $\{x\} := x - [x],$ where $[x]$ is the largest integer which is not bigger than $x.$} are smaller than $\varepsilon,$ where $\vert \cdot \vert$ denotes the distance from $\Z.$
Then \eqref{eq:Hof-small} implies that
\begin{align*}
    \vv R_{\alpha}^l \vvh &= \vv R_{\{l\alpha\}} \vvh = \vert \{l \alpha\} \vert   \cdot (\max_M H - \min_M H), \\
    \vv R_{\alpha'}^l \vvh &= \vv R_{\{l\alpha'\}} \vvh = \vert \{l \alpha'\} \vert \cdot  (\max_M H - \min_M H).
\end{align*}
Now by Lemma \ref{lem:hofer-equality}, we get that 
\begin{equation*} 
    \vv R_{\alpha}^l \vvh = \vv R_{\alpha'}^l \vvh.
\end{equation*}
and hence
$$\vert \{l \alpha\} \vert = \vert \{l\alpha'\} \vert.$$
%
Therefore one of the following holds $$\{l\alpha\} = \{l\alpha'\} \hspace{2mm} \text{  or  }  \hspace{2mm}  \{l\alpha\} = 1-\{l\alpha'\}.$$

Consider the first case when $\{l\alpha\} = \{l\alpha'\}.$ Then from the fact that
\begin{align*}
    l\alpha &= [l\alpha] + \{l\alpha\} \\
    l\alpha' &= [l \alpha'] + \{l\alpha'\}
\end{align*}
it follows that
$$\alpha-\alpha' = \frac{[l\alpha] - [l \alpha']}{l} \in \Q.$$
In the second case using the same argument we get that $\alpha+\alpha' \in \Q.$
This completes the proof of Proposition \ref{prop:rot_num}.
\end{proof}

\subsection{$C^0$ rigidity of exponentially Liouville pseudo-rotations}\label{sec:C0-rigidity-proof}
In this section we prove our results on $C^0$ rigidity of pseudo-rotations; these are our main applications of Theorem \ref{theo:inequality}.
\subsubsection{The case of AKPRs}
We present here the proof of Theorem \ref{theo:C0-rigidity}.

\begin{proof}[Proof of Theorem \ref{theo:C0-rigidity}]
Let $\varphi \in \Ham (M,\omega)$ be an exponentially Liouville AKPR and let $c>0$. Choose $h_m \in \Symp (M,\omega), \alpha_m \in \mathbb{T}^k$ such that $\{\alpha_m\}$ has a limit point $\alpha$ which is exponentially Liouville.
%
%
Then $\alpha \in \Li$ implies that there exists a sequence $n_i \arr \infty$ such that 
\begin{equation} \label{estimate1}
    \vv n_i \alpha \vv < e^{-c n_i},
\end{equation}
where  $\vv \cdot \vv$ is as in \eqref{distance}.

By Lemma \ref{lem:hofer-equality} and Lemma \ref{lem:hofer-inequality} we have
\begin{equation}\label{exp-convergence}
    \vv \varphi^{n_i} \vvh = \vv R_{n_i \alpha} \vvh  \leq 2k\vv n_i \alpha \vv \cdot \vv \mu \vv_{ \infty}.
\end{equation}
Now from \eqref{estimate1} we get that 
$$\vv \varphi^{n_i} \vvh \leq e^{-c n_i} \cdot 2k \vv \mu \vv_{\infty},$$
and hence $\varphi$ is super-exponentially Hofer rigid. 
Then, from Corollary \ref{cor:super-gamma-rigid} we conclude that $\varphi$ is also $C^0$ rigid.
This completes the proof of Theorem \ref{theo:C0-rigidity}.
\end{proof}

\subsubsection{The case of $\mathbb{C}P^n$}
In the case when $M = {\C}P^n$ Ginzburg and G\"urel proved $C^0$ rigidity for an (a priori) more general class of pseudo-rotations, see \cite[Theorem 5.16]{Ginzburg-Gurel18a}; the same result for $\mathbb{C}P^1$ is due to Bramham \cite{Bramham15b}. 
We will explain how one can use the inequality from Theorem \ref{theo:inequality} to simplify the proof of the $C^0$ rigidity result, using similar ideas as in the proof of Theorem \ref{theo:C0-rigidity} above.

We first briefly recall Ginzburg and G\"urel's definition of \emph{exponentially Liouville} pseudo-rotations.  Given a fixed point $x$ of a Hamiltonian diffeomorphism, using the choice of a generating Hamiltonian $F$ such that $\varphi = \phi^1_F,$ and a capping $\overline{x}$ one can associate to $\overline{x}$ a real number $\Delta_F(\overline{x})$ called the mean index of $x$. 
Now, given a pseudo-rotation\footnote{ Ginzburg and G\"urel define pseudo-rotations of $\C P^n$ to be those Hamiltonian diffeomorphisms which have exactly $n+1$ periodic points.} $\varphi$ of $\mathbb{C}P^n$, its mean index vector $\vec{\Delta}_F(\varphi)$ is the vector in $\mathbb{T}^{n+1} = \R^{n+1}/2(n+1)\Z^{n+1}$ whose components record the mean indices of the fixed points of $\varphi$; see \cite{Ginzburg-Gurel18a} for the definition of the mean index and other details.  
According to Ginzburg and G\"urel, a pseudo-rotation $\varphi$ is exponentially Liouville if its mean index vector is exponentially Liouville, i.e.  $\forall c>0, \hspace{1mm} \exists k\in \N $ such that  $ \vv k \vec{\Delta}_F(\varphi) \vv < e^{-c k},$
where $\vv \cdot \vv$ is the distance from 0. 
Being exponentially Liouville is independent of the choice of the Hamiltonian $F$.

\medskip

Now, it turns out that for any pseudo-rotation $\varphi$ we have  $$\gamma(\varphi^k) \leq c \vv k \vec{\Delta}_F(\varphi) \vv$$
where $c$ is a constant depending only $ \vec{\Delta}_F(\varphi)$; see \cite[Remark 5.5]{Ginzburg-Gurel18a}. Hence, if $\vec{\Delta}_F(\varphi)$ is exponentially Liouville, we can pick the sequence $\{n_i\}_{i \in \N}$ such that 
$$\sqrt{\gamma (\varphi^{n_i})} \hspace{0.7mm} \vv D\varphi^{n_i} \vv \to 0.$$
The $C^0$ rigidity for such $\varphi$ then follows immediately from Theorem \ref{theo:inequality}.

\medskip

We remark here that, under assumptions from Remark \ref{rmk:conditions} (which hold in all known examples of PRs constructed using the method of Anosov and Katok), an AKPR being  exponentially Liouville in our sense is equivalent to that of Ginzburg and G\"urel.
Here, we briefly outline the proof of this statement.
%
We denote by $\overline{x}_1, ... ,\overline{x}_{l}$ the fixed point of the $\mathbb{T}^k$-action with the trivial cappings,\footnote{Note that in the setting of \cite{Ginzburg-Gurel18a} $l=n+1,$ but we do not impose this as a condition since we do not need it for the proof of the statement, see Remark \ref{rem:exp-liouvill-equiv}.} 
and denote by 
$$\mu = (H_1, ... , H_k) : M \arr \R^k$$ the momentum map of the action.
Then under assumptions from Remark \ref{rmk:conditions},
using standard properties of the mean index (see e.g. \cite[Section 2.4]{GG1}) one can show the following.

\begin{itemize}

    \item[(i)] For every $1 \leq i \leq l, 1 \leq j \leq k,$ and $\xi \in \R$ it holds that $${\Delta}_{\xi H_j} (\overline{x}_i) \in \xi \Z.$$ 
    Hence, using the fact that the rotation in the direction $\alpha =(\alpha_1,...,\alpha_k) \in \mathbb{T}^k := \R^k / \Z^k$ is generated by a Hamiltonian 
    $$H_{\alpha} := \sum_{j=1}^k \alpha_j H_j,$$
    using properties of the mean index ((MI2), (MI7), (MI8) from \cite[Section 2.4]{GG1}) we get that 
    $$\Delta_{H_{\alpha}} (\overline{x}_i) \in \alpha \cdot \Z^k := \{n_1 \alpha_1+ ... + n_k\alpha_k  \hspace{1mm} \vert \hspace{1mm} (n_1,...,n_k) \in \Z^k\}.$$

    
    \item[(ii)] Let $\varphi$ be an AKPR and let $\alpha_m \in \mathbb{T}^k$ be a sequence associated to $\varphi$ as in Definition \ref{def:AKPR}.
    Let $\alpha$ be an accumulation point of the sequence $\alpha_m.$
    Then for every Hamiltonian $F$ which generates $\varphi$ and every $1 \leq i \leq l$ it holds that $${\Delta}_F(\overline{x}_i) - \Delta_{ H_{\alpha}}(\overline{x}_i)  \in \Z.$$

\end{itemize}

Set $$\vec{\Delta}_{H_{\alpha}}(R_{\alpha}) := (\Delta_{H_{\alpha}}(\overline{x}_1), ..., \Delta_{H_{\alpha}}(\overline{x}_{l})).$$ 
Now note that (ii) implies that $\vec{\Delta}_{F}(\varphi)$ is exponentially Liouville iff $\vec{\Delta}_{H_{\alpha}}(R_{\alpha})$ is exponentially Liouville,\footnote{This follows from the fact that the property of being an exponentially Liouville vector is preserved by a translation by a vector with rational coordinates.}
while it can be checked that (i) implies that
$\vec{\Delta}_{H_{\alpha}}(R_{\alpha})$ is exponentially Liouville iff $\alpha \in \Li.$ 
Combining these two items we conclude that $\vec{\Delta}(\varphi)$ is exponentially Liouville if and only if $\alpha \in \Li,$ which proves the equivalence of Definition \ref{def:exp_Liouville} with the definition of exponentially Liouville PRs of Ginzburg and G\"urel.

\begin{remark} \label{rem:exp-liouvill-equiv} \normalfont
The same argument works for any closed symplectic manifold that admits a Hamiltonian $\mathbb{T}^k$-action and an AKPR, which satisfy properties stated in Remark \ref{rmk:conditions}. To be more precise, we say that a PR $\varphi$ is \emph{exponentially Liouville in the sense of Ginzburg and G\"urel} if its mean index vector is exponentially Liouville.\footnote{Note that this definition does not require any additional assumptions on symplectic manifold besides that it admits PRs (not even the existence of a Hamiltonian $\T^k$-action). We impose the extra assumptions only in the case of AKPRs.} Then, an AKPR (from such a $\T^k$-action) is exponentially Liouville in the sense of Ginzburg and G\"urel if and only if it is exponentially Liouville in the sense of Definition \ref{def:exp_Liouville}.

\end{remark}

\subsection{Rigidity with respect to the Hofer and spectral metrics}\label{sec:-rigidity-Hofer-spectral}

In this section we prove another form of rigidity of AKPRs. Namely, that with respect to the Hofer metric, see Definition \ref{def:rigidity}.

\begin{proposition}[Hofer rigidity of AKPRs] \label{prop:hofer-rigidity}
Every AKPR is Hofer rigid.
\end{proposition}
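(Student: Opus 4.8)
The plan is to exploit the two lemmas just established together with the recurrence of torus rotations. First I would fix an accumulation point $\alpha \in \T^k$ of the sequence $\{\alpha_m\}$ associated to $\varphi$; such a point exists because $\T^k$ is compact. By Lemma \ref{lem:hofer-equality}, for every integer $j$ we have the exact identity $\vv \varphi^j \vvh = \vv R_{j\alpha} \vvh$, which reduces the problem of controlling the Hofer norm of the iterates of $\varphi$ to controlling the Hofer norm of the iterates of the single rotation $R_\alpha$.

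Next I would bound the right-hand side using Lemma \ref{lem:hofer-inequality}, which gives $\vv R_{j\alpha}\vvh \leq 2k \vv j\alpha \vv \cdot \vv \mu \vv_\infty$. Combining the two estimates yields, for every integer $j$,
\[ \vv \varphi^j \vvh \leq 2k \, \vv j\alpha \vv \cdot \vv \mu \vv_\infty. \]
Thus it suffices to produce a sequence of integers $n_i \arr \infty$ along which $\vv n_i \alpha \vv \arr 0$, where $\vv \cdot \vv$ is the distance from $0$ on $\T^k$ defined in \eqref{distance}; note that $\vv \mu \vv_\infty$ is a fixed constant independent of $j$.

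The existence of such a recurrence sequence is the heart of the argument, and I would obtain it from the compactness of $\T^k$. Since the points $\{n\alpha : n \geq 1\}$ all lie in the compact space $\T^k$, the sequence admits a convergent subsequence $n_j\alpha \arr \gamma$. For indices $j' < j$ both large, the difference satisfies $\vv (n_j - n_{j'})\alpha \vv \leq \vv n_j\alpha - \gamma \vv + \vv \gamma - n_{j'}\alpha \vv$, which can be made arbitrarily small; moreover, fixing $j'$ large and letting $j \arr \infty$ forces $n_j - n_{j'} \arr \infty$. A diagonal extraction over shrinking tolerances then produces integers $n_i \arr \infty$ with $\vv n_i\alpha \vv \arr 0$. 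Equivalently, one could invoke Poincar\'e recurrence for the rotation $x \mapsto x + \alpha$ of $\T^k$ applied to the point $0$.

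The step I expect to require the most care is this last one: ensuring that the recurrence times $n_i$ genuinely tend to infinity rather than remaining bounded. The diagonal construction above handles this for an arbitrary $\alpha \in \T^k$, so no arithmetic hypothesis on $\alpha$ is needed here. Feeding these $n_i$ into the displayed inequality gives $\vv \varphi^{n_i}\vvh \arr 0$, which is exactly the assertion that $\varphi$ is Hofer rigid in the sense of Definition \ref{def:rigidity}.
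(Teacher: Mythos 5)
Your proposal is correct and follows essentially the same route as the paper: fix an accumulation point $\alpha$ of $\{\alpha_m\}$, combine Lemma \ref{lem:hofer-equality} and Lemma \ref{lem:hofer-inequality} to get $\vv \varphi^{n}\vvh \leq 2k\vv n\alpha\vv\cdot\vv\mu\vv_\infty$, and use recurrence of the orbit $\{n\alpha\}$ in the compact group $\T^k$ to find $n_i \arr \infty$ with $\vv n_i\alpha\vv \arr 0$. Your diagonal-extraction argument for the recurrence step is a correct elaboration of what the paper simply asserts.
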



As an immediate consequence we have the following.

\begin{corollary}[$\gamma$-rigidity of AKPRs] \label{cor:gamma-rigidity}
Every AKPR is $\gamma$ rigid. 
\end{corollary}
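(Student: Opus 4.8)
The plan is to prove the Proposition first, since the Corollary is an immediate consequence of it. Let $\varphi$ be an AKPR, with sequences $h_m \in \Symp(M,\omega)$ and $\alpha_m \in \T^k$ such that $h_m^{-1} R_{\alpha_m} h_m \scon \varphi$. Since $\T^k$ is compact, the sequence $\{\alpha_m\}$ has an accumulation point $\alpha \in \T^k$; I fix one. The two workhorses are already available: Lemma \ref{lem:hofer-equality} gives $\vv \varphi^j \vvh = \vv R_{j\alpha} \vvh$ for every integer $j$, while Lemma \ref{lem:hofer-inequality} bounds $\vv R_{\beta} \vvh \leq 2k \vv \beta \vv \cdot \vv \mu \vv_{\infty}$. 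Combining them, for every integer $n$ I obtain
$$\vv \varphi^n \vvh = \vv R_{n\alpha} \vvh \leq 2k \vv n\alpha \vv \cdot \vv \mu \vv_{\infty},$$
so it suffices to produce a sequence of integers $n_i \arr \infty$ along which $\vv n_i \alpha \vv \arr 0$, where $\vv \cdot \vv$ is the max-distance to $\Z^k$ of \eqref{distance}.

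First I would establish the following purely number-theoretic fact: for every $\alpha \in \T^k$ there is a sequence $n_i \arr \infty$ with $\vv n_i \alpha \vv \arr 0$. This is a direct consequence of the pigeonhole principle (Dirichlet's theorem on simultaneous approximation): for each $N$ there is an integer $1 \leq q_N \leq N^k$ with $\vv q_N \alpha \vv \leq 1/N$. If infinitely many distinct values occur among the $q_N$, extracting a subsequence yields the desired $n_i \arr \infty$; if only finitely many occur, then some fixed $q$ satisfies $\vv q\alpha \vv \leq 1/N$ for arbitrarily large $N$, forcing $q\alpha = 0$ in $\T^k$, and then $n_i := iq \arr \infty$ works trivially. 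I emphasise that this argument is valid for \emph{any} $\alpha$, rational or not, so I would deliberately avoid invoking Proposition \ref{prop:rot_num} here — this keeps the rigidity proof independent of the self-contained section on limit points of $\{\alpha_m\}$. Feeding such a sequence into the displayed inequality gives $\vv \varphi^{n_i} \vvh \arr 0$, which is exactly Hofer rigidity in the sense of Definition \ref{def:rigidity}.

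The Corollary then follows in a single line: since the spectral metric is dominated by Hofer's metric, $\gamma(\varphi^{n_i}) \leq \vv \varphi^{n_i} \vvh \arr 0$ along the same sequence, so $\varphi$ is $\gamma$ rigid.

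I do not expect a genuine obstacle: the content is carried entirely by Lemmas \ref{lem:hofer-equality} and \ref{lem:hofer-inequality}, and the only additional input is elementary Diophantine approximation. The one point requiring a little care is guaranteeing that the approximating sequence tends to infinity (rather than stabilising at a finite $q$), which the case distinction above resolves.
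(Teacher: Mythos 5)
Your proof is correct and follows essentially the same route as the paper: Proposition \ref{prop:hofer-rigidity} is obtained from Lemmas \ref{lem:hofer-equality} and \ref{lem:hofer-inequality} together with the fact that the sequence $\{n\alpha\}$ accumulates at $0$ in $\T^k$, and the corollary then follows from $\gamma \leq \vv \cdot \vvh$. The only (harmless) difference is that you justify the accumulation at $0$ by an explicit pigeonhole argument, whereas the paper simply cites this recurrence and adds a parenthetical appeal to Proposition \ref{prop:rot_num}.
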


\begin{proof}[Proof of Proposition \ref{prop:hofer-rigidity}]
Let $\alpha$ be an accumulation point of the sequence $\alpha_m.$

Consider the sequence $\{n \alpha \} \subset \mathbb T^k$; this sequence accumulates at $0$, i.e.\ there exists a sequence of integers $n_i \arr \infty$ and $\vv n_i \alpha \vv \arr 0.$ (Note that, by Proposition \ref{prop:rot_num}, $\alpha$ has an irrational coordinate, and hence $n \alpha \neq 0$ for all $n$.)
Then by Lemma \ref{lem:hofer-equality} and Lemma \ref{lem:hofer-inequality} we get
$$\vv \varphi^{n_i} \vvh = \vv R_{n_i \alpha } \vvh \leq 2k\vv n_i \alpha \vv \cdot \vv \mu \vv_{\infty} \overset{i \arr \infty}{\larr}  0,$$
where $\mu$ is the momentum map.
This completes the proof of Proposition \ref{prop:hofer-rigidity}.
\end{proof}


\subsection{Lagrangian Poincare recurrence} \label{sec:Lag-Poin-recurrence}

The following conjecture was stated independently by Ginzburg and Viterbo around 2010.

\begin{conjecture*}[Lagrangian Poincar\'e Recurrence]
For every compactly supported Hamiltonian diffeomorphism $\varphi$ of a symplectic manifold $(M, \omega)$
and every closed Lagrangian submanifold $L \subseteq M$ there exists a sequence of iterations $n_i \arr \infty$ such that $\varphi^{n_i}(L) \cap L \neq \emptyset.$
Moreover, the density of the sequence $n_i$ is related to a symplectic capacity of $L.$
\end{conjecture*}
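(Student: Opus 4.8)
Since the displayed statement is a well-known open conjecture, the plan is not to prove it in full but to establish its recurrence assertion for the class of maps studied in this paper. Concretely, I would prove: \emph{every AKPR $\varphi$ satisfies Lagrangian Poincar\'e recurrence, i.e.\ for every closed Lagrangian $L \subseteq M$ there is a sequence $n_i \arr \infty$ with $\varphi^{n_i}(L) \cap L \neq \emptyset$.} Note that this applies to \emph{all} AKPRs, not only the exponentially Liouville ones, since the argument needs only Hofer (or $\gamma$) recurrence and not $C^0$ convergence. The underlying mechanism is the one used throughout the paper: recurrence in a bi-invariant metric prevents the displacement of objects that have positive displacement energy.

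The first ingredient is Proposition \ref{prop:hofer-rigidity}: every AKPR is Hofer rigid, so there is a sequence $n_i \arr \infty$ with $\vv \varphi^{n_i} \vvh \arr 0$. (Alternatively, Corollary \ref{cor:gamma-rigidity} yields $\gamma(\varphi^{n_i}) \arr 0$; the two arguments run in parallel.) This is the only place where the AKPR hypothesis is used. The second ingredient is the positivity of the Hofer displacement energy of a closed Lagrangian. Setting
\[ e(L) := \inf\left\{ \vv \psi \vvh \ \middle|\ \psi \in \Ham(M,\omega),\ \psi(L) \cap L = \emptyset \right\}, \]
Chekanov's theorem on the displacement energy of Lagrangians gives $e(L) > 0$; this is the Lagrangian analogue of the energy--capacity inequality \eqref{eq:energy-capacity-hofer} that drives the proof of Theorem \ref{theo:inequality}. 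Combining the two ingredients finishes the argument: for $i$ large enough we have $\vv \varphi^{n_i} \vvh < e(L)$, so $\varphi^{n_i}$ does not displace $L$, which is exactly the statement $\varphi^{n_i}(L) \cap L \neq \emptyset$.

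The soft half of the argument (producing a recurrent sequence) is immediate from the results already established, so the genuine input is the positivity $e(L)>0$; on a general closed symplectic manifold this is the main obstacle, as it rests on Floer-theoretic area estimates rather than on the elementary symplectic geometry used elsewhere in the paper. I would also flag that the quantitative refinement in the conjecture — relating the \emph{density} of $\{n_i\}$ to a symplectic capacity of $L$ — is \emph{not} delivered by this method: here the recurrent sequence originates from the recurrence of $\{n\alpha\}$ in $\T^k$, whose density is governed by the arithmetic of $\alpha$ (hence by $\varphi$), and bears no relation to $L$. For this reason I would state only the qualitative recurrence for AKPRs and explicitly leave the density statement outside the scope of the present approach.
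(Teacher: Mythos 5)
Your qualitative argument is correct and runs on exactly the mechanism the paper uses: combine Hofer rigidity of AKPRs (Proposition \ref{prop:hofer-rigidity}) with Chekanov's positivity of the displacement energy $e(L)>0$ to conclude that $\varphi^{n_i}$ cannot displace $L$ once $\vv \varphi^{n_i} \vvh < e(L)$. The paper packages this as Theorem \ref{theo:recurrence} (recurrence for any subset $A$ with $e(A)>0$) followed by Corollary \ref{theo:Lagr-recurrence} (specializing to Lagrangians via Chekanov), but the soft half of your write-up matches it.

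The genuine gap is your claim that the density refinement is out of reach of this method; the paper obtains it by a direct upgrade of the very same argument. The estimate $\vv \varphi^{j} \vvh = \vv R_{j\alpha} \vvh \leq 2k \vv j\alpha \vv \cdot \vv \mu \vv_{\infty}$ from Lemmas \ref{lem:hofer-equality} and \ref{lem:hofer-inequality} shows that $\varphi^{j}(L) \cap L \neq \emptyset$ for \emph{every} $j$ with $\vv j\alpha \vv < C'' e(L)$, where $C'' = 1/(2k\vv \mu \vv_{\infty})$, not merely along a sparse subsequence with $\vv n_i \alpha \vv \arr 0$. The sequence $\{ j\alpha \}_{j \in \N}$ is equidistributed in the closed subgroup $G = \overline{\{ j\alpha \mid j \in \N \}} \subseteq \T^k$ (after replacing $\varphi$ by a suitable power one may assume $G$ is a subtorus of some dimension $d$), so the asymptotic proportion of such $j$ equals the normalized volume of the $C''e(L)$-neighborhood of $0$ in $G$, which is bounded below by $C'\,(C'' e(L))^d$. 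This gives
\[
\liminf_{i \arr \infty} \frac{\# \{ j \leq i \mid \varphi^{j}(L) \cap L \neq \emptyset \}}{i} \geq \min\{1, C\, e(L)^d\},
\]
a density bound controlled by the displacement energy of $L$ --- precisely the kind of relation to a capacity of $L$ the conjecture asks for (the constant $C$ and the exponent $d$ depend on $\varphi$, but the dependence on $L$ enters through $e(L)$). Your assertion that the density ``bears no relation to $L$'' conflates the \emph{location} of the good times $j$ (indeed governed by the arithmetic of $\alpha$) with their \emph{density} (governed by how large a ball around $0 \in G$ the threshold $C'' e(L)$ carves out).
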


Ginzburg and G\"urel proved the above conjecture in the case where $\varphi$ is a pseudo-rotation of $\C P^n$ and $L$ any closed Lagrangian whose certain homological capacity is positive; the proof relies on $\gamma$ rigidity of $\varphi$, see \cite[Theorem 5.8]{Ginzburg-Gurel18a}.  Using similar ideas, we prove the above conjecture for AKPRs.

\begin{theorem} \label{theo:recurrence}
Let $(M, \omega)$ be a closed (and connected) symplectic manifold admitting an AKPR $\varphi$. Then for every subset $A \subseteq M$ with positive displacement energy $e(A) >0$ there exists a sequence $n_i \arr \infty$ such that $\varphi^{n_i}(A) \cap A \neq \emptyset.$ Moreover, there exist $C>0$ and an integer $1 \leq d \leq k,$ both depending only on $\varphi,$ such that 
\begin{equation} \label{eq:density-intersections}
    \liminf_{i \arr \infty} \frac{\# \{ j \leq i \hspace{1mm} \vert \hspace{1mm} \varphi^j(A) \cap A \neq \emptyset \}}{i} \geq \min\{1, C e(A)^d\}.
\end{equation}
\end{theorem}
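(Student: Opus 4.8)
The plan is to combine the Hofer-norm estimates for AKPRs (Lemmas \ref{lem:hofer-equality} and \ref{lem:hofer-inequality}) with the definition of displacement energy and an equidistribution argument for the orbit of the rotation vector. Fix an accumulation point $\alpha \in \T^k$ of the sequence $\{\alpha_m\}$; by Proposition \ref{prop:rot_num}, $\alpha$ has an irrational coordinate. Let $H := \overline{\{ n\alpha : n \in \Z\}}$ be the closure of the orbit of $\alpha$ in $\T^k$; this is a closed subgroup, and since $\alpha$ has an irrational coordinate $H$ is infinite, so its dimension $d := \dim H$ satisfies $1 \le d \le k$. This $d$ will be the integer appearing in the statement. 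The first step is to reduce the intersection condition to a condition on $n\alpha$: recalling that $e(A) = \inf \{ \vv \psi \vvh : \psi \in \Ham(M,\omega), \ \psi(A) \cap A = \emptyset\}$, the estimate $\vv \varphi^n \vvh < e(A)$ forces $\varphi^n(A) \cap A \neq \emptyset$. By Lemmas \ref{lem:hofer-equality} and \ref{lem:hofer-inequality} we have $\vv \varphi^n \vvh = \vv R_{n\alpha} \vvh \le 2k \vv n\alpha \vv \cdot \vv \mu \vv_{\infty}$, so setting $\varepsilon := e(A)/(2k \vv \mu \vv_{\infty})$ we obtain the implication $\vv n\alpha \vv < \varepsilon \Rightarrow \varphi^n(A) \cap A \neq \emptyset$. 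Hence $\{ n \le i : \vv n \alpha \vv < \varepsilon\} \subseteq \{ n \le i : \varphi^n(A) \cap A \neq \emptyset\}$, and it suffices to lower bound the density of the left-hand set.

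The second step is to compute this density via equidistribution. The rotation $x \mapsto x + \alpha$ on the compact abelian group $H$ is minimal, since $\alpha$ generates a dense subgroup of $H$, and is therefore uniquely ergodic with the Haar probability measure $\lambda_H$ as its only invariant measure. Consequently the empirical measures $\frac{1}{N}\sum_{n=1}^N \delta_{n\alpha}$ converge weakly to $\lambda_H$, and applying the portmanteau inequality to the open set $U := \{ x \in H : \vv x \vv < \varepsilon\}$ yields $\liminf_{i \to \infty} \frac{1}{i}\#\{ n \le i : \vv n\alpha \vv < \varepsilon\} \ge \lambda_H(U)$. Since $\lambda_H(U) > 0$ for every $\varepsilon > 0$, the set of return times is already infinite, which proves the existence of a sequence $n_i \to \infty$ with $\varphi^{n_i}(A) \cap A \neq \emptyset$.

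The final step is the quantitative volume estimate $\lambda_H(U) \ge \min\{1, C_0 \varepsilon^d\}$ for a constant $C_0$ depending only on $H$. Writing the identity component of $H$ as $V/(V \cap \Z^k)$ for a $d$-dimensional subspace $V \subseteq \R^k$ (with $V \cap \Z^k$ a full-rank lattice, by closedness of $H$), the Haar measure near the identity is the normalized Lebesgue measure on $V$. For small $\varepsilon$ the portion of $U$ coming from $\{ v \in V : \vv v \vv_{\infty} < \varepsilon\}$ injects into $H$, and the linear scaling $\{ v \in V : \vv v \vv_{\infty} < \varepsilon\} = \varepsilon \{ v \in V : \vv v \vv_{\infty} < 1\}$ shows its $d$-dimensional volume equals $\varepsilon^d \, \mathrm{vol}_d(\{ v \in V : \vv v \vv_{\infty} < 1\})$; this gives $\lambda_H(U) \ge C_0 \varepsilon^d$ below some threshold $\varepsilon_0$. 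Since $\varepsilon \mapsto \lambda_H(U)$ is nondecreasing and equals $1$ once $\varepsilon$ exceeds $\max_{x \in H} \vv x \vv$, after shrinking $C_0$ we obtain $\lambda_H(U) \ge \min\{1, C_0 \varepsilon^d\}$ for all $\varepsilon > 0$. Substituting $\varepsilon = e(A)/(2k \vv \mu \vv_{\infty})$ gives $\lambda_H(U) \ge \min\{1, C e(A)^d\}$ with $C = C_0 (2k \vv \mu \vv_{\infty})^{-d}$, and combining with the previous two steps proves \eqref{eq:density-intersections}.

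The main obstacle is this last step: extracting the clean lower bound $\min\{1, C e(A)^d\}$ uniformly in $A$ from the geometry of $H$. One must correctly identify the exponent $d$ as the dimension of the orbit closure, control the local volume of metric balls in the subtorus $H$ (treating the possibly disconnected case by working on the identity component), and patch the small-$\varepsilon$ power-law bound to the trivial bound $\lambda_H(U) \le 1$ so that a single constant $C$ works for all values of $e(A)$. The equidistribution input, by contrast, is entirely standard once one passes to the orbit closure $H$.
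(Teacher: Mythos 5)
Your proposal is correct and follows essentially the same route as the paper: reduce the intersection condition to $\vv n\alpha\vv < e(A)/(2k\vv\mu\vv_\infty)$ via Lemmas \ref{lem:hofer-equality} and \ref{lem:hofer-inequality}, then apply equidistribution of $\{n\alpha\}$ in the orbit closure together with a volume lower bound $\operatorname{vol}(B(\varepsilon)\cap G)\gtrsim \varepsilon^d$ with $d=\dim G$. The only cosmetic difference is that the paper handles a possibly disconnected orbit closure by replacing $\varphi$ with $\varphi^{k_0}$ so that $G$ becomes a torus, whereas you work directly on the identity component; both are fine.
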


\begin{proof}[Proof of Theorem \ref{theo:recurrence}]

 Let $h_m \in \Symp  (M,\omega), \alpha_m \in \mathbb{T}^k$ be sequences associated to $\varphi$ as in Definition \ref{def:AKPR}. Let $\alpha$ be an accumulation point of the sequence $\alpha_m.$  
By Lemma \ref{lem:hofer-equality} and Lemma \ref{lem:hofer-inequality}, $\vv \varphi^{j} \vvh = \vv R_{j\alpha} \vvh \leq 2k \vv j \alpha \vv \cdot \vv \mu \vv_{\infty}$. Define $C'': = \frac{1}{2k\vv \mu \vv_{ \infty}}$. Observe that if   $\vv j \alpha\vv < C'' \cdot e(A)$, then $\varphi^j(A) \cap A \neq \emptyset$.

Denote by $G \subseteq \mathbb{T}^k$ be the subgroup topologically generated by $\alpha.$ Then $G$ is the closure of the positive semi-orbit $\{ j \alpha \hspace{1mm} \vert \hspace{1mm} j \in \N\},$ and without loss of generality we may assume that $G$ is isomorphic to a torus.\footnote{In general, $G$ is isomorphic to the direct product of a cyclic group of order $k_0$ and a torus. Thus replacing $\varphi$ with $\varphi^{k_0}$ we may assume that $G$ is a torus.} 
Set $d = \dim G.$ 
Then for the volume of the intersection of the $\varepsilon$-neighborhood $B(\varepsilon)$ of 0 in $\mathbb{T}^k$ with $G$ we have that 
\begin{equation*}
   \operatorname{vol} (B(\varepsilon) \cap G) \geq C' \varepsilon^d,
\end{equation*}
where $C'>0$ depends only on $\alpha$ (and hence depends it only on $\varphi$).
Since the sequence $\{j \alpha\}_{j \in \N}$ is equidistributed in $G \subseteq \mathbb{T}^k,$ we get 
$$\lim_{i \arr \infty} \frac{\# \{ j \leq i \hspace{1mm} \vert \hspace{1mm} \vv j \alpha \vv <  C'' \cdot e(A)\}}{i} \geq  \min\{1, C' (C'' \cdot e(A))^d\}. $$
This then implies \eqref{eq:density-intersections} (for $C:=C' {C''}^d)$ and completes the proof of Theorem \ref{theo:recurrence}.
\end{proof}

As an immediate corollary of Theorem \ref{theo:recurrence} and Chekanov's theorem \cite{Chekanov} we obtain the following.

\begin{corollary} [Lagrangian Poincar\'e Recurrence for AKPRs] \label{theo:Lagr-recurrence}
Let $(M, \omega)$ be a closed (and connected) symplectic manifold that admits a Hamiltonian $\mathbb{T}^k$ action and let $\varphi \in \Ham (M,\omega)$ be an AKPR. Then for every closed Lagrangian submanifold $L \subseteq M$ there exists a sequence $n_i \arr \infty$ such that $\varphi^{n_i}(L) \cap L \neq \emptyset.$ Moreover, there exist $C>0$ and an integer $1 \leq d \leq k,$ both depending only on $\varphi,$ such that 
$$\liminf_{i \arr \infty} \frac{\# \{ j \leq i \hspace{1mm} \vert \hspace{1mm} \varphi^j(L) \cap L \neq \emptyset \}}{i} \geq \min \{1, C \cdot e(L)^d \}.$$
\end{corollary}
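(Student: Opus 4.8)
The plan is to derive this statement directly from Theorem \ref{theo:recurrence} by applying it to the subset $A = L$; the only additional ingredient is a lower bound guaranteeing that a closed Lagrangian has positive displacement energy, which is exactly the content of Chekanov's theorem.

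First I would recall the displacement energy
$$e(A) := \inf \{ \vv \psi \vvh \hspace{1mm} \vert \hspace{1mm} \psi \in \Ham(M,\omega), \; \psi(A) \cap A = \emptyset \},$$
with the convention that $e(A) = +\infty$ if no Hamiltonian diffeomorphism displaces $A$. This is precisely the notion of displacement energy implicitly used in the proof of Theorem \ref{theo:recurrence}: there, the key observation is that whenever $\vv \varphi^j \vvh < e(A)$ the iterate $\varphi^j$ fails to displace $A$, and hence $\varphi^j(A) \cap A \neq \emptyset$.

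Next I would invoke Chekanov's theorem \cite{Chekanov}, which asserts that any closed Lagrangian submanifold $L \subseteq (M,\omega)$ satisfies $e(L) > 0$; indeed, Chekanov bounds $e(L)$ from below by a positive quantity measured by the minimal symplectic area of certain pseudo-holomorphic discs and spheres with boundary on $L$, although for our purposes only the strict positivity $e(L)>0$ matters. (If $L$ happens to be non-displaceable we simply have $e(L) = +\infty > 0$.)

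With this in hand the corollary is immediate: applying Theorem \ref{theo:recurrence} to $A = L$ --- whose hypothesis $e(L) > 0$ is now verified --- yields a sequence $n_i \arr \infty$ with $\varphi^{n_i}(L) \cap L \neq \emptyset$, together with the quantitative lower bound
$$\liminf_{i \arr \infty} \frac{\# \{ j \leq i \hspace{1mm} \vert \hspace{1mm} \varphi^j(L) \cap L \neq \emptyset \}}{i} \geq \min \{1, C \cdot e(L)^d \},$$
where $C>0$ and $1 \leq d \leq k$ depend only on $\varphi$ (through the accumulation point $\alpha$ of the sequence $\{\alpha_m\}$). There is no genuine obstacle in this deduction, since the entire analytic content is already packaged into Theorem \ref{theo:recurrence} and Chekanov's theorem. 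The only point requiring (routine) care is to confirm that the displacement energy appearing in Theorem \ref{theo:recurrence} coincides with the Hofer-displacement energy bounded below by Chekanov, which it does.
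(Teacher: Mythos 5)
Your proposal is correct and coincides with the paper's own (one-line) deduction: the corollary is obtained by applying Theorem \ref{theo:recurrence} to $A = L$, with Chekanov's theorem supplying the hypothesis $e(L) > 0$ for a closed Lagrangian. Nothing further is needed.
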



 \bibliographystyle{alpha}
 \bibliography{biblio}

{\small

\medskip
 \noindent Du\v{s}an Joksimovi\'c\\
\noindent Universit\'e Paris-Saclay, Orsay, France\\
 {\it e-mail:} dusan.joksimovic@universite-paris-saclay.fr

\medskip
 \noindent Sobhan Seyfaddini\\
\noindent Sorbonne Universit\'e and Universit\'e de Paris, CNRS, IMJ-PRG, F-75006 Paris, France.\\
 {\it e-mail:} sobhan.seyfaddini@imj-prg.fr
 
}

\end{document}